\newcounter{rmrk}[section]
\numberwithin{equation}{section}
\newtheorem{assumption}{Assumption}
\numberwithin{equation}{section}
\newcommand{\norm}[1]{\left\Vert#1\right\Vert}
\newcommand{\abs}[1]{\left\vert#1\right\vert}
\newcommand{\set}[1]{\left\{#1\right\}}
\newcommand{\pspace}{\mathbf{\Lambda}}
\newcommand{\dspace}{\mathbf{\mathcal{D}}}
\newcommand{\pmeas}{\mu_{\pspace}}
\newcommand{\dmeas}{\mu_{\dspace}}
\newcommand{\pborel}{\mathcal{B}_{\pspace}}
\newcommand{\dborel}{\mathcal{B}_{\dspace}}
\newcommand{\pspacedim}{k}
\newcommand{\dspacedim}{m}
\newcommand{\cbprior}{initial}
\newcommand{\cbpriorabrv}{i} 
\newcommand{\cbposterior}{updated}
\newcommand{\cbposteriorabrv}{u} 
\newcommand{\priormeas}{P_{\pspace}^{{\cbpriorabrv}}}
\newcommand{\postmeas}{P_{\pspace}^{{\cbposteriorabrv}}}
\newcommand{\priordens}{\pi_{\pspace}^{{\cbpriorabrv}}}
\newcommand{\postdens}{\pi_{\pspace}^{{\cbposteriorabrv}}}
\newcommand{\pfpriordens}{\pi_{\dspace}^{Q}}
\newcommand{\obsmeas}{P_{\dspace}}
\newcommand{\obsdens}{\pi_{\dspace}}
\newcommand{\postdensa}{{\pi}_{\pspace}^{u,n}}
\newcommand{\pfpriordensa}{{\pi}_{\dspace}^{{Q_n}}}
\def\qoi{Q(\lambda)}
\def\qoia{{Q}_n(\lambda)}
\def\ra{r_n(\lambda)}
\title{Convergence of Probability Densities using Approximate Models for Forward and Inverse Problems in Uncertainty Quantification: Extensions to $L^p$}
\date{\today}
\author{T.~Butler\thanks{University of Colorado Denver, Department of Mathematical and Statistical Sciences} \and T.~Wildey\thanks{Sandia National Laboratories, Center for Computing Research ({\tt tmwilde@sandia.gov}). The views expressed in the article do not necessarily represent the views of the U.S. Department of Energy or the United States Government.  Sandia National Laboratories is a multimission laboratory managed and operated by National Technology and Engineering Solutions of Sandia, LLC., a wholly owned subsidiary of Honeywell International, Inc., for the U.S. Department of Energy's National Nuclear Security Administration under contract DE-NA-0003525.} \and W.~Zhang\thanks{University of Colorado Denver, Department of Mathematical and Statistical Sciences} }
\begin{document}
\maketitle

\newcommand{\slugmaster}{%
\slugger{sisc}{xxxx}{xx}{x}{x--x}}

\begin{abstract}
A previous study analyzed the convergence of probability densities for forward and inverse problems when a sequence of approximate maps between model inputs and outputs converges in $L^\infty$.
This work generalizes the analysis to cases where the approximate maps converge in $L^p$ for any $1\leq p < \infty$.
Specifically, under the assumption that the approximate maps converge in $L^p$, the convergence of probability density functions solving either forward or inverse problems is proven in $L^q$ where the value of $1\leq q<\infty$ may even be greater than $p$ in certain cases.
This greatly expands the applicability of the previous results to commonly used methods for approximating models (such as polynomial chaos expansions) that only guarantee $L^p$ convergence for some $1\leq p<\infty$.
Several numerical examples are also included along with numerical diagnostics of solutions and verification of assumptions made in the analysis.
\end{abstract}

\begin{keywords}
inverse problems, uncertainty quantification, density estimation, $L^p$ convergence, surrogate modeling, approximate modeling
\end{keywords}

\begin{AMS}
60F25, 60H30, 60H35
\end{AMS}

\pagestyle{myheadings} \thispagestyle{plain} \markboth{T.~Butler, T.~Wildey, W.~Zhang}{$L^p$ convergence of push-forward/pull-back densities}

\section{Introduction}\label{sec:intro}

Many uncertainty quantification (UQ) problems involve the propagation of uncertainties, described using probability measures, between the input and output spaces of a computational model.
When the input (or output) space is a measure space with a specified {dominating} measure used to describe the sizes (not probabilities) of sets, we can use the Radon-Nikodym derivative to uniquely express any probability measure that is absolutely continuous with respect to the dominating measure as a probability density function (PDF or density).
We are interested in densities solving two types of UQ problems in this work.
The first density is on the output space of a model defined by the push-forward of an initial probability density specified on the inputs of a model.
This represents a solution to a forward UQ problem where the goal is to predict probable outputs of a model using some initial/prior knowledge of model inputs.
The second density is on the input space of a model defined by the pullback of an observed (or specified) probability density on model outputs.
This represents a solution to a particular type of inverse UQ problem where the goal is to determine a density on model inputs whose push-forward through the model reconstructs the observed (or specified) probability density.
In a sense, this represents a type of probabilistic calibration of inputs.

While the existence, uniqueness, and stability of push-forward and pullback densities are studied in~\cite{BJW18a}, such densities are often numerically estimated using a finite number of computational solutions of the model relating input and output spaces.
We are particularly interested in cases where the outputs are defined by functionals applied to the solution to the model, which defines a (measurable) map between input and output spaces.
Thus, we focus on the underlying, and often implicitly defined, map between input and output spaces instead of the actual model itself.

In \cite{BJW18b}, we show that if a sequence of approximate maps from input to output spaces converges in $L^\infty$, then the associated sequences of either push-forward or pullback densities converge.
However, we cannot always guarantee, nor is it always reasonable to assume, that a sequence of approximate maps converges in $L^\infty$.
For instance, we may construct approximate maps using generalized polynomial chaos expansions (PCEs)~\cite{GhanemSpanos,XK-02}, sparse grid interpolation~\cite{barthelmann00,Ma:2009:AHS:1514432.1514547} and Gaussian process models~\cite{scheuerer_schaback_schlather_2013,rasmussen2006}.
Take PCEs as an example, according to the Cameron-Martin Theorem, the Hermite-chaos expansion converges for any arbitrary random process with finite second-order moments, i.e., the convergence of approximate maps occurs in $L^2$.

The primary contribution of this work is the generalization of convergence results for push-forward and pullback densities defined by a sequence of approximate maps that converge in $L^p$ with $1\leq p < \infty$.
The generalization of results is by no means trivial and includes both subtle, and in some places dramatic, changes to the analysis presented in \cite{BJW18b}.

We briefly recall some familiar results from measure theory (see, e.g., \cite{Folland} for a concise introduction or \cite{Bogachev_vol1} for a more thorough treatise) that provide some insight into how the convergence theory developed for densities associated with maps that converge in $L^\infty$ can be extended to a subsequence of densities associated with maps that converge in $L^p$.
First, it is well known that if a sequence of measurable functions converges in $L^p$, then there exists a subsequence which converges {\em almost everywhere} (a.e.).
In other words, the subsequence converges except on a set of measure zero.
Applying Egorov's Theorem, this subsequence converges {\em almost} uniformly (i.e., in $L^\infty$).
The measure-theoretic distinction between a property holding a.e.~as opposed to holding in an almost sense is subtle when first encountered.
A property is said to be true in an almost sense if for every $\epsilon>0$ there exists a set of measure less than $\epsilon$ such that the property holds on the complement of this ``small'' set.
This implies that the results of \cite{BJW18b} may apply to a subsequence of the $L^p$ maps restricted to a ``large'' subset of the spaces.
However, while the existence results of the subsequence and subset are powerful theoretical tools, they are of little use practically. Specifically, it is not at all clear, in general, how one is to determine either the subsequence of maps or the subset of the spaces on which to carry out the analysis.
Subsequently, it is unclear which part of the sequence of approximate maps or subsets of the spaces we can even apply the results from \cite{BJW18b} when the maps converge in $L^p$.

This paper provides convergence results directly on the entire sequence (not a subsequence) of approximate solutions to both the forward and inverse problems when using approximate maps that converge in $L^p$.
We prove several convergence results for push-forward densities including the convergence of approximate push-forward densities evaluated at data points predicted by the approximate models.
The tightness of push-forward probability measures associated with these densities is also proven which permits the proof of convergence of approximate updated densities.
Most results are framed as convergence in $L^p$ both for simplicity and to coincide with the convergence of the approximate maps.
However, it is worth noting that either finite measurability of the parameter space or convergence of the approximate maps in $L^s$ for any $1\leq s\leq p$ immediately results in $L^s$ convergence of both push-forward densities and updated densities for any $1\leq s\leq p$ by standard results in measure theory.

The remainder of the paper is organized as follows.
In Section~\ref{sec:notation}, we summarize most of the notation and terminology used in this work.
The forward and inverse problem analyses follow in Sections~\ref{sec:forward-theory} and \ref{sec:inverse-theory}, respectively.
We discuss the impact of computational estimates of densities in Section~\ref{sec:other-errors}.
Section~\ref{sec:verifying-assumptions} discusses how we numerically verify the two main assumptions of this work when they hold in a stronger sense. 
The main numerical results that employ PCEs follow in Section~\ref{sec:applications}.
Concluding remarks and acknowledgments are given in Sections~\ref{sec:conclusions} and~\ref{sec:acknowledge}, respectively. 
Appendix~\ref{app:a} includes some additional discussion on the generality of the assumptions used in this work in the context of an example involving a singular push-forward density.
This includes discussion on how we may utilize the numerical tools and diagnostics described in Section~\ref{sec:verifying-assumptions} to identify sources of error and improve accuracy of estimated densities.

\section{The spaces and maps}\label{sec:notation}

Here, we summarize some common terminology, notation, and implicit assumptions used for both the forward and inverse analysis in this work.
Denote by $\pspace\subset\mathbb{R}^{\pspacedim}$ the space of all {\em physically possible} inputs to the model, which we refer to as parameters.
A quantity (or quantities) of interest (QoI) refers to the functional(s) applied to the solution space of the model corresponding to scalar (or vector-valued) outputs.
Depending on the problem, a QoI map may correspond to either output values we wish to predict or values for which we have already obtained data, e.g., for the purpose of model validation or calibration.
The parameter-to-QoI map (often referred to simply as the QoI map) is denoted by $Q(\lambda):\pspace\to\dspace\subset\mathbb{R}^\dspacedim$ to make explicit the dependence on model parameters.
Let $(Q_n)$ denote a sequence of approximate QoI maps, $Q_n:\pspace\to\dspace$.
Here, $\dspace$ denotes the range of all QoI maps indicating the space of {\em physically possible} output data that the model, or any of its approximations, can predict.

Assume that $(\pspace, \pborel, \pmeas)$ and $(\dspace,\dborel,\dmeas)$ are both measure spaces.
Here, $\pborel$ and $\dborel$ denote the Borel $\sigma$-algebras inherited from the metric topologies on their respective spaces, and $\pmeas$ and $\dmeas$ denote the dominating measures for which probability densities (i.e., Radon-Nikodym derivatives of probability measures) are defined on each space.
It is implicitly assumed that every QoI map is a measurable map between these measure spaces.
The assumption, stated explicitly throughout this work, is that $Q_n\to Q$ in $L^p(\pspace)$.
The convergence of densities is then considered on either $L^r(\dspace)$ or $L^p(\pspace)$.
By $L^r(\dspace)$ or $L^p(\pspace)$, unless stated otherwise, it is implicitly assumed that the integrals are with respect to the dominating measures $\dmeas$ or $\pmeas$, respectively.
A practical assumption implicitly made by computations involving finite sampling of these maps and standard density approximation techniques is that each map is also piecewise smooth.

\section{Forward problem analysis}\label{sec:forward-theory}

In this section we analyze the convergence of push-forward densities obtained from solving a forward UQ problem using a sequence of approximate maps which converge to the exact map in the $L^p$ sense where $p$ is fixed and $1\leq p <\infty$.

\subsection{Problem definition}

\begin{definition}[Forward Problem and Push-Forward Measure]\label{def:forward-problem}
  Given a probability measure $P_\pspace$ on $(\pspace,\pborel)$ that is absolutely continuous with respect to $\pmeas$ and admits a density $\pi_\pspace$, the forward problem is the determination of the push-forward probability measure
\begin{equation*}
P^{Q}_\dspace(A) = P_\pspace(Q^{-1}(A)), \quad \forall A\in \dborel.
\end{equation*}
on $(\dspace,\dborel)$ that is absolutely continuous with respect to $\dmeas$ and admits a density $\pi_\dspace^{Q}$.
\end{definition}

We emphasize that the same probability measure $P_\pspace$ (which we later refer to as an {\em initial} measure for the inverse problem) is considered for any forward problem.
It is only the QoI maps that may change between problems.

\subsection{Convergence analysis}\label{sec:fp_approx_models}

Denote by $(\pfpriordensa)$ the sequence of approximate push-forward densities.
In \cite{BJW18b}, the major takeaway of the forward analysis is that $\pfpriordensa(\qoia)\to\pfpriordens(\qoi)$ in $L^\infty(\pspace)$ when $Q_n\to Q$ in $L^\infty(\pspace)$.
The interpretation is that the approximate push-forward densities evaluated at the approximate QoI converge {\em essentially uniformly} to the exact push-forward density evaluated at the exact QoI.
The proof in \cite{BJW18b} requires connecting several concepts, assumptions, and results, which we summarize below to highlight exactly where  the previous analysis fails to apply in this work.

The assumption of $Q_n\to Q$ in $L^\infty(\pspace)$ implies that the push-forward probability measures converge, i.e., there is convergence in distribution.
However, this is not sufficient to guarantee convergence of the densities.
Specifically, we cannot guarantee that $\pfpriordensa(q)\to \pfpriordens(q)$ converges even pointwise on $\dspace$ let alone the desired result that $\pfpriordensa(\qoia)\to\pfpriordens(\qoi)$ in $L^\infty(\pspace)$.
For this, we need an additional assumption about the type of sequence of approximate densities we are dealing with that removes pathological cases from consideration.
The work of \cite{Sweeting_86} defines sufficient conditions in terms of weaker asymptotic notions of equicontinuity and uniform equicontinuity.
\begin{definition}\label{def:a.e.c}
Using similar notation from \cite{Sweeting_86}, we say that a sequence of real-valued functions $(u_n)$ defined on $\mathbb{R}^\pspacedim$ is {\em asymptotically equicontinuous (a.e.c.)} at $x\in\mathbb{R}^\pspacedim$ if
\begin{equation*}
\forall \epsilon>0,\,  \exists \delta(x,\epsilon)>0, n(x,\epsilon) \text{ s.t. } \abs{y-x}<\delta(x,\epsilon), n>n(x,\epsilon) \Rightarrow \abs{u_n(y)-u_n(x)}<\epsilon.
\end{equation*}
If $\delta(x,\epsilon)=\delta(\epsilon)$ and $n(x,\epsilon)=n(\epsilon)$, then we say that the sequence is {\em asymptotically uniformly equicontinuous (a.u.e.c.)}.
\end{definition}

In \cite{BJW18b}, we assume that $(\pfpriordensa)$ are uniformly bounded and asymptotically uniformly equicontinuous.
Then, the uniform continuity of the exact push-forward can be obtained by a result from \cite{Sweeting_86}.
To obtain the desired result, the triangle inequality is applied to
\begin{equation*}
	\norm{\pfpriordensa(\qoia)-\pfpriordens(\qoi)}_{L^\infty(\pspace)}
\end{equation*}
resulting in an upper bound of the form
\begin{equation*}
	\norm{\pfpriordensa(\qoia)-\pfpriordens(\qoia)}_{L^\infty(\pspace)} + \norm{\pfpriordens(\qoia)-\pfpriordens(\qoi)}_{L^\infty(\pspace)}.
\end{equation*}
The first term is recongized as being identical to
\begin{equation*}
	\norm{\pfpriordensa(q)-\pfpriordens(q)}_{L^\infty(\dspace)},
\end{equation*}
and subsequently goes to zero by a result from \cite{Sweeting_86}.
The second term goes to zero by combining the uniform continuity of $\pfpriordens$ along with the $L^\infty(\pspace)$ convergence of the approximate maps.
If $Q_n\to Q$ in $L^p(\pspace)$ for $1\leq p<\infty$, then this conclusion is not in general true.
However, as we show below, under certain conditions it is true that the approximate push-forward densities evaluated at the approximate QoI converge to the exact push-forward density evaluated at the exact QoI in $L^s(\pspace)$ for $1\leq s\leq p$.
Below, we give our first formal assumption.

\begin{assumption}\label{assump:almost_aec}
The sequence of approximate push-forward densities, $(\pfpriordensa)$ is almost uniformly bounded and almost a.e.c.
\end{assumption}

In Assumption~\ref{assump:almost_aec}, and in the rest of this work, we make use of the term ``almost'' in the precise measure-theoretic sense.
Specifically, a property is said to hold in an almost sense on a measure space $(X,\mathcal{B}_X, \mu_X)$ with $\mu_X(X)<\infty$ if for any $\epsilon>0$, there exists $A_{\epsilon}\in \mathcal{B}_X$ such that $\mu_X( A_{\epsilon})<\epsilon$ and the property holds on $X\backslash A_{\epsilon}$.
This is loosely interpreted as stating that a property holds except on sets of small, but positive, measure.
This is a familiar concept in measure theory (e.g., see Egorov's or Lusin's theorem \cite{Folland,Bogachev_vol1}).
If $\mu_X(X)=\infty$, then we instead say that a property holds in the almost sense if for any $A\in\mathcal{B}_X$ with $\mu_X(A)<\infty$ and $\epsilon>0$, there exists $A_\epsilon\in\mathcal{B}_X$ such that $\mu_X(A_\epsilon)<\epsilon$ and the property holds on $A\backslash A_\epsilon$.

That the properties in Assumption~\ref{assump:almost_aec} hold in an almost sense is similar to the concept of {\em tightness} of measures.
For example, when $X$ has a topology (so that $\mathcal{B}_X$ represents the Borel $\sigma$-algebra on $X$), then a family of probability measures $\mathcal{P}$ on $(X,\mathcal{B}_X)$ is considered tight if for any $\epsilon>0$ there exists compact $K_\epsilon \in\mathcal{B}_X$ such that for any $\mathbb{P}\in\mathcal{P}$, $\mathbb{P}(K_\epsilon) > 1-\epsilon$.
In other words, there exists a compact set containing most of the probability no matter which probability measure is considered.
Assumptions of tightness are often assumed in classical results involving the convergence of measures.
For example, Prokhorov's theorem \cite{Bogachev_vol2} states that tightness of probability measures is a necessary and sufficient condition for the precompactness of these measures in the topology of weak convergence.

The next two lemmas describe the types of convergence of the approximate push-forward densities that occur on $\dspace$ under certain conditions.

\begin{lemma}\label{lem:pf_prior_almostauec_1}
Suppose $1\leq p<\infty$ and $\qoia\to \qoi$ in $L^p(\pspace)$.
If Assumption~\ref{assump:almost_aec} holds, then
\begin{equation}\label{limit:ptwise_pf_D}
	\pfpriordensa(q) \to \pfpriordens(q)  \indent \text{almost on } \dspace.
\end{equation}
Furthermore, 
for any compact subset $D_c\subset\dspace$ and $1\leq r\leq \infty$,
\begin{equation}\label{limit:q_pf_Dc_1}
	\pfpriordensa(q)\to \pfpriordens(q) \indent \text{almost in }  L^r(D_c).
\end{equation}
\end{lemma}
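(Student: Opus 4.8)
\emph{Proof sketch.} The plan is to first upgrade the $L^p(\pspace)$ convergence of the maps to weak convergence of the push-forward measures $\pfpriormeasa\to\pfpriormeas$, and then to run the argument of \cite{BJW18b} (which rests on \cite{Sweeting_86}) \emph{locally}, on the ``large'' subsets of $\dspace$ supplied by Assumption~\ref{assump:almost_aec}, rather than globally as was possible in the $L^\infty$ case. For the weak convergence: for any bounded continuous $g\colon\dspace\to\Real$ the push-forward change of variables gives $\int_\dspace g\,d\pfpriormeasa=\int_\pspace g(\qoia)\,dP_\pspace$. Since $\qoia\to\qoi$ in $L^p(\pspace)$, every subsequence admits a further subsequence along which $\qoia\to\qoi$ $\pmeas$-a.e., hence $P_\pspace$-a.e.\ because $P_\pspace\ll\pmeas$; along it $g(\qoia)\to g(\qoi)$ $P_\pspace$-a.e., and since $g$ is bounded and $P_\pspace$ is a probability measure, dominated convergence yields $\int_\pspace g(\qoia)\,dP_\pspace\to\int_\dspace g\,d\pfpriormeas$. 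A subsequence-of-every-subsequence argument then gives $\pfpriormeasa\to\pfpriormeas$ weakly (in distribution). Routing this through $\pmeas$-a.e.\ convergent subsequences, rather than through convergence in $P_\pspace$-measure, avoids having to assume uniform integrability of $\pi_\pspace$.

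To obtain \eqref{limit:ptwise_pf_D}, fix $\eps>0$. By Assumption~\ref{assump:almost_aec} there are $\A^{(1)},\A^{(2)}\in\dborel$ with $\dmeas(\A^{(1)}),\dmeas(\A^{(2)})<\eps/2$ such that $(\pfpriordensa)$ is uniformly bounded on $\dspace\setminus\A^{(1)}$ and a.e.c.\ (in the sense of Definition~\ref{def:a.e.c}) at every point of $\dspace\setminus\A^{(2)}$; set $\A_\eps=\A^{(1)}\cup\A^{(2)}$, so $\dmeas(\A_\eps)<\eps$. On $\dspace\setminus\A_\eps$ the sequence of approximate push-forward densities satisfies precisely the hypotheses under which the \cite{Sweeting_86}-based step of \cite{BJW18b} converts the weak convergence established above into pointwise convergence of the densities, so $\pfpriordensa(q)\to\pfpriordens(q)$ for every $q\in\dspace\setminus\A_\eps$ (and $\pfpriordens$ is continuous at each such $q$). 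Since $\eps>0$ was arbitrary this is exactly \eqref{limit:ptwise_pf_D}; when $\dmeas(\dspace)=\infty$ one carries out the same construction inside an arbitrary set of finite measure, as dictated by the ``almost'' convention stated after Assumption~\ref{assump:almost_aec}.

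For \eqref{limit:q_pf_Dc_1}, fix a compact $D_c\subset\dspace$ (so $\dmeas(D_c)<\infty$, $\dmeas$ being locally finite) and $\eps>0$. By the previous paragraph applied with parameter $\eps/2$ there is $B\in\dborel$, $\dmeas(B)<\eps/2$, with $\pfpriordensa(q)\to\pfpriordens(q)$ for all $q\in\dspace\setminus B$, in particular $\dmeas$-a.e.\ on the finite-measure set $D_c\setminus B$. Egorov's theorem then provides $E\subset D_c\setminus B$ with $\dmeas(E)<\eps/2$ such that $\pfpriordensa\to\pfpriordens$ uniformly on $(D_c\setminus B)\setminus E$. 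Taking $\A_\eps=B\cup E$, so $\dmeas(\A_\eps)<\eps$, and any $1\le r\le\infty$,
\[
\norm{\pfpriordensa-\pfpriordens}_{L^r(D_c\setminus\A_\eps)}\le\norm{\pfpriordensa-\pfpriordens}_{L^\infty(D_c\setminus\A_\eps)}\,\dmeas(D_c)^{1/r}\To 0,
\]
which is \eqref{limit:q_pf_Dc_1}.

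The main obstacle is the local use of the \cite{Sweeting_86}/\cite{BJW18b} argument in the second paragraph: one must verify that its passage from convergence in distribution to pointwise convergence of densities is genuinely local, using only a.e.c.\ at the point $q$ under consideration together with (local) boundedness and the \emph{global} weak convergence $\pfpriormeasa\to\pfpriormeas$, so that restricting the boundedness and equicontinuity hypotheses to $\dspace\setminus\A_\eps$ does not break it. The remaining work is measure-theoretic bookkeeping --- combining the several ``almost'' statements so that the union of exceptional sets stays below $\eps$, and reducing the case $\dmeas(\dspace)=\infty$ to finite-measure subsets.
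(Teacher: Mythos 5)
Your proposal is correct, and the first half follows the paper's own route: the paper likewise passes from $L^p(\pspace)$ convergence of the maps to convergence in distribution and then invokes Theorem~1 of \cite{Sweeting_86} ``in an almost sense'' on the complement of the small exceptional set furnished by Assumption~\ref{assump:almost_aec}; your subsequence-of-subsequences justification of the weak convergence is in fact more explicit than what the paper writes, and the ``main obstacle'' you flag (that Sweeting's passage from convergence in distribution to pointwise convergence of densities localizes to the good set) is handled by the paper at exactly the same level of detail, so it is not a gap relative to the published argument. Where you genuinely diverge is in the second claim: the paper does \emph{not} use Egorov. It chooses the exceptional set $N_\delta$ to be \emph{open} (using the standard inner/outer regularity of Borel measures on Euclidean space), observes that $D_c\setminus N_\delta$ is then compact so that the a.e.c.\ property upgrades to a.u.e.c.\ there, and applies Theorem~2 of \cite{Sweeting_86} to get convergence in $L^\infty(D_c\setminus N_\delta)$ directly, followed by the embedding $L^\infty\subset L^r$ on a finite-measure set. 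Your route instead derives \eqref{limit:q_pf_Dc_1} from the already-established pointwise statement \eqref{limit:ptwise_pf_D} via Egorov, paying with a second shrinking of the good set. Both are valid and yield the same ``almost in $L^r(D_c)$'' conclusion; the paper's version is slightly stronger in that the single set $N_\delta$ inherited from the assumption suffices and one obtains genuine uniform convergence on all of $D_c\setminus N_\delta$ without further excision, while yours is more elementary (no appeal to Sweeting's Theorem~2 or to the openness of the exceptional set) and makes the logical dependence of \eqref{limit:q_pf_Dc_1} on \eqref{limit:ptwise_pf_D} explicit.
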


Before we prove this lemma, we recall two useful results from measure theory.
First, if $X$ is a Euclidean space and $A\in\mathcal{B}_X$ with $\mu_X(A)<\infty$, then, for any $\epsilon>0$, there exists an open $G\supset A$ and a compact $K\subset A$ such that $\mu_X(G\backslash K)<\epsilon$.
In other words, any set of finite measure can be approximated arbitrarily well (in measure) by either an open set containing it or a compact set contained within it.
Thus, without loss of generality, the set where the uniform bounded and a.e.c~criteria in Assumption~\ref{assump:almost_aec} do {\em not} hold can be chosen as an open set.
This is done in the proof below where this set is denoted by $N_\delta$. 
Second, the general form of Lusin's theorem applies to the measure spaces considered in this work.
In the context of this work, this implies that the densities are almost continuous functions with compact support.
Subsequently, densities defined on either $\pspace$ or $\dspace$ are almost in $L^r$ for $1\leq r\leq \infty$.

\begin{proof}
Since $\qoia\to \qoi$ in $L^p(\pspace)$, $\qoia$ converges weakly to $\qoi$.
This along with Assumption~\ref{assump:almost_aec} implies that $\pfpriordensa(q)$ almost converges to $\pfpriordens(q)$ using Theorem 1 from \cite{Sweeting_86} in an almost sense.
This proves \eqref{limit:ptwise_pf_D}.

Let $\delta>0$ and consider any compact subset $D_c\subset \dspace$. 
By Assumption~\ref{assump:almost_aec} and the fact that $\pfpriordens(q)$ is almost in $L^{\infty}(\dspace)$, there exists an open set $N_{\delta}$ such that $\dmeas(N_{\delta})<\delta$, $(\pfpriordensa)$ is uniformly bounded and a.e.c. on $\dspace\backslash N_{\delta}$, and $\pfpriordens(q)$ is in $L^\infty(\dspace\backslash N_\delta)$.
By the compactness of $D_c$ and openness of $N_\delta$, $(\pfpriordensa)$ is a.u.e.c. on $D_c\backslash N_{\delta}$.
Then, by Theorem 2 from \cite{Sweeting_86},
\begin{equation*}
\pfpriordensa(q) \to \pfpriordens(q) \text{ in }\ L^{\infty}(D_c\backslash N_{\delta}).
\end{equation*}
Finally, for any $1\leq r<\infty$, the embedding $L^{\infty}(D_c\backslash N_{\delta}) \subset L^r(D_c\backslash N_{\delta})$ implies $\pfpriordensa(q) \to \pfpriordens(q)$ in $L^r(D_c\backslash N_{\delta})$ which proves \eqref{limit:q_pf_Dc_1}.
\end{proof}

To extend the convergence in~\eqref{limit:q_pf_Dc_1} to all of $D_c$, we require some mechanism for controlling the size of the $L^r$-norms of the push-forward densities on the $N_\delta$ set.
In other words, we need to ensure that the probability mass of the push-forwards is not collecting in data sets of arbitrarily small measure.
An assumption of uniform integrability avoids such pathological families of densities.

\begin{lemma}\label{lem:pf_prior_almostauec}
Suppose $1\leq p<\infty$ and $\qoia\to \qoi$ in $L^p(\pspace)$.
If Assumption~\ref{assump:almost_aec} holds and the family of push-forward densities defined by these maps are uniformly integrable in $L^r(\dspace)$ for some $1\leq r<\infty$, then for any compact subset $D_c\subset\dspace$
\begin{equation}\label{limit:q_pf_Dc}
	\pfpriordensa(q)\to \pfpriordens(q) \indent \text{ in }  L^r(D_c).
\end{equation}
\end{lemma}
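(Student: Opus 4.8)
The plan is to upgrade the ``almost in $L^r(D_c)$'' convergence already provided by Lemma~\ref{lem:pf_prior_almostauec_1} to genuine $L^r(D_c)$ convergence by splitting the error over the ``good'' set $D_c\setminus N_\delta$, where we have uniform convergence, and the ``bad'' set $N_\delta$ of small $\dmeas$-measure, whose contribution we control using uniform integrability. In effect this is Vitali's convergence theorem applied on the finite-measure space $D_c$: almost-everywhere convergence (available from \eqref{limit:ptwise_pf_D}) together with uniform integrability of $\{\abs{\pfpriordensa(q)}^r\}_n$ forces $L^r$ convergence. I will phrase it so that it slots directly onto Lemma~\ref{lem:pf_prior_almostauec_1} and its proof.

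First I would record a preliminary fact: the exact push-forward density lies in $L^r(D_c)$. By \eqref{limit:ptwise_pf_D}, $\pfpriordensa(q)\to\pfpriordens(q)$ almost on $\dspace$, hence $\pfpriordensa(q)\to\pfpriordens(q)$ a.e.\ on $D_c$ after deleting a set of arbitrarily small measure; since uniform integrability in $L^r(\dspace)$ supplies the uniform bound $\sup_n\int_{\dspace}\abs{\pfpriordensa(q)}^r\,d\dmeas<\infty$, Fatou's lemma gives $\int_{D_c}\abs{\pfpriordens(q)}^r\,d\dmeas\le\liminf_n\int_{D_c}\abs{\pfpriordensa(q)}^r\,d\dmeas<\infty$. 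As $\dmeas(D_c)<\infty$, absolute continuity of the integral then makes $\int_A\abs{\pfpriordens(q)}^r\,d\dmeas$ arbitrarily small whenever $\dmeas(A)$ is small.

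Next I would carry out the main estimate. Fix $\eps>0$. Using uniform integrability of $\{\abs{\pfpriordensa(q)}^r\}_n$ together with the previous paragraph, choose $\delta>0$ small enough that both $\int_A\abs{\pfpriordensa(q)}^r\,d\dmeas<\eps$ for every $n$ and $\int_A\abs{\pfpriordens(q)}^r\,d\dmeas<\eps$ whenever $A\in\dborel$ with $\dmeas(A)<\delta$. Now invoke \eqref{limit:q_pf_Dc_1}, more precisely the open set $N_\delta$ constructed in the proof of Lemma~\ref{lem:pf_prior_almostauec_1}, with this value of $\delta$: there is an open $N_\delta$ with $\dmeas(N_\delta)<\delta$ such that $\pfpriordensa(q)\to\pfpriordens(q)$ in $L^r(D_c\setminus N_\delta)$. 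Then, using $\abs{a-b}^r\le 2^{r-1}(\abs{a}^r+\abs{b}^r)$,
\begin{align*}
\norm{\pfpriordensa(q)-\pfpriordens(q)}_{L^r(D_c)}^r
&\le \norm{\pfpriordensa(q)-\pfpriordens(q)}_{L^r(D_c\setminus N_\delta)}^r\\
&\quad+2^{r-1}\int_{N_\delta\cap D_c}\big(\abs{\pfpriordensa(q)}^r+\abs{\pfpriordens(q)}^r\big)\,d\dmeas .
\end{align*}
The second term is bounded by $2^r\eps$ for every $n$ by the choice of $\delta$, while the first term tends to $0$ as $n\to\infty$ by Lemma~\ref{lem:pf_prior_almostauec_1}. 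Hence $\limsup_n\norm{\pfpriordensa(q)-\pfpriordens(q)}_{L^r(D_c)}^r\le 2^r\eps$, and letting $\eps\downarrow 0$ yields \eqref{limit:q_pf_Dc}.

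The only genuine obstacle, and the sole place the new hypothesis enters, is bounding the density mass on $N_\delta$: Lemma~\ref{lem:pf_prior_almostauec_1} gives no control there, and without uniform integrability the push-forward mass could concentrate on data sets of vanishing $\dmeas$-measure, which would defeat the splitting argument. The remaining care is pure bookkeeping --- fixing $\delta$ from uniform integrability \emph{before} calling Lemma~\ref{lem:pf_prior_almostauec_1} to produce $N_\delta$, and verifying (as above) that $\pfpriordens(q)$ itself lies in $L^r(D_c)$ so that it too contributes negligibly on $N_\delta$.
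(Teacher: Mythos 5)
Your proof is correct and follows essentially the same route as the paper's: fix $\delta$ from uniform integrability first, take the exceptional set $N_\delta$ from Lemma~\ref{lem:pf_prior_almostauec_1}, split the $L^r(D_c)$ norm over $D_c\setminus N_\delta$ (handled by \eqref{limit:q_pf_Dc_1}) and $N_\delta$ (handled by uniform integrability). Your treatment is in fact slightly more careful than the paper's, which tacitly assumes $\pfpriordens$ belongs to the uniformly integrable family and absorbs the $2^{r-1}$-type constant into its ``$\epsilon^r/2$'' bound, whereas you justify $\pfpriordens\in L^r(D_c)$ via Fatou and track the constant explicitly.
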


\begin{proof}
Let $\epsilon>0$.
Use the uniform integrability in $L^r(\dspace)$ to choose $\delta>0$ such that the integral of any push-forward density raised to the $r$ power over a set $A\in\dborel$ with $\dmeas(A)<\delta$ is bounded by $\epsilon^r/2$.
Use Assumption~\ref{assump:almost_aec} to choose the $N_\delta$ set.
Using the fact that
\begin{equation*}
	\norm{\pfpriordensa(q)-\pfpriordens(q)}_{L^r(D_c)} = \left[\norm{\pfpriordensa(q)-\pfpriordens(q)}_{L^r(D_c\backslash N_\delta)}^r + \norm{\pfpriordensa(q)-\pfpriordens(q)}_{L^r(N_\delta)}^r\right]^{1/r}
\end{equation*}
along with~\eqref{limit:q_pf_Dc_1} to bound the first term by $\epsilon^r/2$ for sufficiently large $n$ proves~\eqref{limit:q_pf_Dc}.
\end{proof}

The next lemma immediately follows by recalling the classical result that $C^\infty_c(\mathbb{R}^n)$ (i.e., the space of infinitely differentiable functions with compact support) is dense in $L^q(\mathbb{R}^n)$ \cite{Adams}.

\begin{lemma}\label{lem:lipschitz_approx}
Lipschitz continuous functions with compact support are dense in $L^q(X)$ for $X\in\set{\pspace,\dspace}$ and any $1\leq q<\infty$.
\end{lemma}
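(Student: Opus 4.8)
The plan is to deduce the statement directly from the classical fact, recalled just above, that $C^\infty_c(\mathbb{R}^{\pspacedim})$ (respectively $C^\infty_c(\mathbb{R}^{\dspacedim})$) is dense in $L^q(\mathbb{R}^{\pspacedim})$ (respectively $L^q(\mathbb{R}^{\dspacedim})$), together with the elementary observation that every element of $C^\infty_c(\mathbb{R}^n)$ is Lipschitz continuous with compact support. Indeed, a $C^\infty$ function supported in a compact set $K$ has a continuous, hence bounded, gradient on $K$, so it is globally Lipschitz; thus $C^\infty_c(\mathbb{R}^n)$ is contained in the space of compactly supported Lipschitz functions, and density of the smaller class forces density of the larger one. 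This already proves the claim in the case $X=\mathbb{R}^n$.

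For $X\in\set{\pspace,\dspace}$ a (proper) subset of the ambient Euclidean space, I would reduce to the full-space case by extension and restriction. Given $f\in L^q(X)$, extend $f$ by zero outside $X$ to obtain $\tilde f\in L^q(\mathbb{R}^n)$ with $\norm{\tilde f}_{L^q(\mathbb{R}^n)}=\norm{f}_{L^q(X)}$, using the standing convention from Section~\ref{sec:notation} that $\pmeas$ and $\dmeas$ are the volume measures inherited from Lebesgue measure on the ambient space. Given $\epsilon>0$, choose $g\in C^\infty_c(\mathbb{R}^n)$ with $\norm{\tilde f-g}_{L^q(\mathbb{R}^n)}<\epsilon$. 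Then $g|_X$ is Lipschitz on $X$ with support contained in a compact set, and
\begin{equation*}
\norm{f-g|_X}_{L^q(X)}\leq\norm{\tilde f-g}_{L^q(\mathbb{R}^n)}<\epsilon,
\end{equation*}
which establishes the density of compactly supported Lipschitz functions in $L^q(X)$.

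There is no serious obstacle here; the only point requiring care is the measure-theoretic bookkeeping, namely ensuring that $L^q(X)$ is taken with respect to (the restriction of) Lebesgue measure so that the cited classical density result applies and the extension-by-zero step is norm-preserving. This is precisely the implicit convention on the dominating measures adopted in this work, so the short argument above suffices.
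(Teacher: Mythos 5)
Your argument is correct and is essentially the paper's own: the paper simply observes that the lemma ``immediately follows'' from the density of $C^\infty_c(\mathbb{R}^n)$ in $L^q(\mathbb{R}^n)$, which is exactly the reduction you make, since smooth compactly supported functions have bounded gradients and are therefore Lipschitz. Your additional care with extension by zero and restriction for proper subsets $X\subset\mathbb{R}^n$ just fills in bookkeeping the paper leaves implicit.
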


We now state the analog to the main result of the forward analysis given in \cite{BJW18b}.
The interpretation is that the approximate push-forward densities evaluated at the approximate QoI converge to the exact push-forward density evaluated at the exact QoI in $L^p(\pspace)$.

\begin{theorem}[$L^p$ Convergence of Push-Forward Densities]\label{thm:lam_pf_D}
Suppose $1\leq p<\infty$ and $\qoia\to \qoi$ in $L^p(\pspace)$.
If Assumption~\ref{assump:almost_aec} holds  and the family of push-forward densities defined by these maps are uniformly integrable in $L^p(\dspace)$, and $\dspace$ is compact,
\begin{equation}\label{limit:lam_pf_D}
	\pfpriordensa(\qoia)\to \pfpriordens(\qoi) \text{ in } L^p(\pspace).
\end{equation}
\end{theorem}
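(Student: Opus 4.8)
The plan is to split the quantity $\norm{\pfpriordensa(\qoia)-\pfpriordens(\qoi)}_{L^p(\pspace)}$ via the triangle inequality into
\begin{equation*}
\norm{\pfpriordensa(\qoia)-\pfpriordens(\qoia)}_{L^p(\pspace)} + \norm{\pfpriordens(\qoia)-\pfpriordens(\qoi)}_{L^p(\pspace)},
\end{equation*}
exactly as in \cite{BJW18b}, but to handle each term with the weaker ingredients now available. Since $\dspace$ is compact we may take $D_c=\dspace$ in Lemma~\ref{lem:pf_prior_almostauec}, so the first term equals $\norm{\pfpriordensa(q)-\pfpriordens(q)}_{L^p(\dspace)}$ after a change of variables (the push-forward density composed with its own defining map is just the density evaluated on $\dspace$, pulled back through $Q_n$; here I would need $P_\pspace$-integrability, which follows since $P_\pspace$ is a probability measure and the integrand is bounded in the relevant almost sense). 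That term then vanishes by \eqref{limit:q_pf_Dc} with $r=p$.

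For the second term, $\pfpriordens$ is a fixed function, so the issue is continuity of $\lambda\mapsto\pfpriordens(\qoia(\lambda))$ relative to $\pfpriordens(\qoi(\lambda))$ in $L^p(\pspace)$ given only $\qoia\to\qoi$ in $L^p(\pspace)$. The clean route is Lemma~\ref{lem:lipschitz_approx}: given $\epsilon>0$, pick a Lipschitz, compactly supported $g$ with $\norm{\pfpriordens-g}_{L^p(\dspace)}<\epsilon$. Then bound
\begin{equation*}
\norm{\pfpriordens(\qoia)-\pfpriordens(\qoi)}_{L^p(\pspace)}\le \norm{\pfpriordens(\qoia)-g(\qoia)}_{L^p(\pspace)} + \norm{g(\qoia)-g(\qoi)}_{L^p(\pspace)} + \norm{g(\qoi)-\pfpriordens(\qoi)}_{L^p(\pspace)}.
\end{equation*}
The middle term is controlled by the Lipschitz constant of $g$ times $\norm{\qoia-\qoi}_{L^p(\pspace)}\to 0$. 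The outer two terms are of the form "$\norm{(h\circ R_n)}_{L^p(\pspace)}$ for $h$ with $\norm{h}_{L^p(\dspace)}$ small" and need a continuity-of-composition / change-of-variables estimate: controlling $\int_\pspace |h(\qoia)|^p\,d\pmeas$ (and the same with $\qoi$) by something proportional to $\int_\dspace |h|^p\,d\dmeas$. For the exact map this is immediate since $\int_\pspace|h(\qoi)|^p\,d\pmeas = \int_\dspace |h|^p\,\pfpriordens\,d\dmeas \le \norm{\pfpriordens}_{L^\infty}\norm{h}_{L^p(\dspace)}^p$ when the push-forward density is (almost) bounded; for the approximate maps one argues analogously with $\pfpriordensa$, using Assumption~\ref{assump:almost_aec}'s almost-uniform-boundedness together with the uniform integrability to handle the exceptional small-measure set, so that the bound is uniform in $n$.

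The main obstacle I expect is precisely this last step: making rigorous the uniform-in-$n$ change-of-variables estimate $\int_\pspace |h(\qoia)|^p\,d\pmeas \lesssim \norm{h}_{L^p(\dspace)}^p$ using only the \emph{almost} uniform boundedness of $(\pfpriordensa)$ and uniform integrability, rather than a genuine $L^\infty$ bound. One must split $\dspace = (\dspace\setminus N_\delta)\cup N_\delta$, use the uniform bound of $\pfpriordensa$ off $N_\delta$, and absorb the contribution from $N_\delta$ via uniform integrability of the family $\{(\pfpriordensa)^p\}$ — choosing $\delta$ first (from uniform integrability, to make the $N_\delta$-contribution at most $\epsilon^p$), then the $N_\delta$ set from Assumption~\ref{assump:almost_aec}, then $g$ and finally $n$ large. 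Threading the order of quantifiers — $\epsilon$, then $\delta$, then $N_\delta$, then the Lipschitz approximant $g$, then $n$ — so that every constant is independent of $n$ is the delicate bookkeeping; once that is in place the three limits combine and \eqref{limit:lam_pf_D} follows.
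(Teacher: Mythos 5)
Your proposal is correct and follows essentially the same route as the paper: the same four-term decomposition (you merely group the last three terms under a second application of the triangle inequality), the same Lipschitz approximation via Lemma~\ref{lem:lipschitz_approx} with the middle term controlled by the Lipschitz constant times $\norm{\qoia-\qoi}_{L^p(\pspace)}$, and the same appeal to Lemma~\ref{lem:pf_prior_almostauec} with $D_c=\dspace$ for the leading term. The only divergence is that the paper simply asserts that the parameter-space norms of the remaining terms are \emph{equal} to the corresponding $L^p(\dspace)$ norms, whereas you carry the push-forward density as a weight in the change of variables and bound it using the almost-uniform boundedness together with uniform integrability on the exceptional set --- a point on which you are, if anything, more explicit than the paper.
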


\begin{proof}
Let $\epsilon>0$.
By Lemma~\ref{lem:lipschitz_approx}, there exists a Lipschitz continuous $\widetilde{\pfpriordens}$ approximating $\pfpriordens$ such that
\begin{equation}\label{eq:poly_approx_error_D}
\norm{\pfpriordens(q) - \widetilde{\pfpriordens}(q)}_{L^p(\dspace)} < \frac{\epsilon}{4}.
\end{equation}
Applying the triangle inequality three times gives
\begin{align}\label{eq:pf_error_4ae_D}
\norm{\pfpriordensa(\qoia)-\pfpriordens(\qoi)}_{L^p(\pspace)} &\leq \norm{\pfpriordensa(\qoia) - \pfpriordens(\qoia)}_{L^p(\pspace)} \nonumber \\
 &\indent + \norm{\pfpriordens(\qoia) - \widetilde{\pfpriordens}(\qoia)}_{L^p(\pspace)}\nonumber\\
&\indent + \norm{\widetilde{\pfpriordens}(\qoia)-\widetilde{\pfpriordens}(\qoi)}_{L^p(\pspace)} \nonumber \\
&\indent + \norm{\widetilde{\pfpriordens}(\qoi) - \pfpriordens(\qoi)}_{L^p(\pspace)}.
\end{align}
Recalling that $\widetilde{\pfpriordens}$ is Lipschitz continuous, there is a constant $C>0$ such that
\begin{equation*}
	\norm{\widetilde{\pfpriordens}(\qoia)-\widetilde{\pfpriordens}(\qoi)}_{L^p(\pspace)} \leq C \norm{\qoia - \qoi}_{L^p(\pspace)}.
\end{equation*}
Then, $\qoia\to\qoi$ in $L^p(\pspace)$ implies that the third term on the right-hand side of~\eqref{eq:pf_error_4ae_D}  is bounded by $\epsilon/4$ by setting $n$ sufficiently large.

The first, second, and fourth terms on the right-hand side of~\eqref{eq:pf_error_4ae_D} are equivalently written, respectively, as
\begin{align*}
\norm{\pfpriordensa(\qoia) - \pfpriordens(\qoia)}_{L^p(\pspace)} &= \norm{\pfpriordensa(q) - \pfpriordens(q)}_{L^p(\dspace)}, \\
 \norm{\pfpriordens(\qoia) - \widetilde{\pfpriordens}(\qoia)}_{L^p(\pspace)} &=  \norm{\pfpriordens(q) - \widetilde{\pfpriordens}(q)}_{L^p(\dspace)},\\
\norm{\widetilde{\pfpriordens}(\qoi) - \pfpriordens(\qoi)}_{L^p(\pspace)} &= \norm{\widetilde{\pfpriordens}(q) - \pfpriordens(q)}_{L^p(\dspace)}.
\end{align*}
By~\eqref{eq:poly_approx_error_D}, the second and fourth term on the right-hand side of \eqref{eq:pf_error_4ae_D} are bounded by $\epsilon/4$.
Finally, by Lemma~\ref{lem:pf_prior_almostauec}, the first term on the right-hand side of~\eqref{eq:pf_error_4ae_D} is bounded by $\epsilon/4$, which proves~\eqref{limit:lam_pf_D}.
\end{proof}


The following corollary is included for completeness and states what happens if the conditions in Assumption~\ref{assump:almost_aec} are strengthened to hold a.e.~instead of in an almost sense.
The result is a strengthening of the conclusions in Lemma~\ref{lem:pf_prior_almostauec_1}, not requiring Lemma~\ref{lem:pf_prior_almostauec} (the uniform integrability in $L^p$ is now a given), while the conclusion of Theorem~\ref{thm:lam_pf_D} remains unchanged.
The changes to the proofs given above are straightforward and are therefore omitted.

\begin{corollary}\label{cor:pf_convergence}
Suppose $1\leq p<\infty$ and $\qoia\to \qoi$ in $L^p(\pspace)$.
If the criteria in Assumption~\ref{assump:almost_aec} hold a.e.~instead of in an almost sense, then (1) $\pfpriordens(q)$ is a.e.~continuous on $\dspace$, and (2)
\begin{equation}\label{limit:ptwise_pf}
	\pfpriordensa(q) \to \pfpriordens(q)  \indent \text{for a.e. } q\in\dspace.
\end{equation}
Furthermore, for any compact subset $D_c\subset \dspace$ and $1\leq r\leq \infty$,
\begin{equation}\label{limit:q_pf}
	\pfpriordensa(q)\to \pfpriordens(q) \indent \text{in }  L^r(D_c).
\end{equation}
Moreover, if $\dspace$ is compact,
\begin{equation}\label{limit:lam_pf}
	\pfpriordensa(\qoia)\to \pfpriordens(\qoi) \indent \text{in } L^p(\pspace).
\end{equation}
\end{corollary}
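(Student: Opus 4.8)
The plan is to re-run the proofs of Lemma~\ref{lem:pf_prior_almostauec_1}, Lemma~\ref{lem:pf_prior_almostauec}, and Theorem~\ref{thm:lam_pf_D} essentially verbatim, everywhere replacing a property that holds ``in an almost sense'' by the stronger statement that it holds a.e., and using the elementary fact that a $\dmeas$-null set is contained in open sets of arbitrarily small measure. Concretely, the a.e.\ form of Assumption~\ref{assump:almost_aec} furnishes a single $\dmeas$-null set $N$ such that, on $\dspace\backslash N$, the sequence $(\pfpriordensa)$ is genuinely uniformly bounded (say by $M$) and a.e.c. Since $\qoia\to\qoi$ in $L^p(\pspace)$ gives weak convergence of $\qoia$ to $\qoi$, Theorem~1 of \cite{Sweeting_86} applied on $\dspace\backslash N$ shows both that $\pfpriordens(q)$ is continuous there and that $\pfpriordensa(q)\to\pfpriordens(q)$ pointwise there; since $\dmeas(N)=0$, these establish conclusion (1) and~\eqref{limit:ptwise_pf}. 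We also record that $\pfpriordens(q)\leq M$ a.e., being the a.e.\ limit of functions bounded by $M$.

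For~\eqref{limit:q_pf} with a compact $D_c\subset\dspace$, fix $\delta>0$ and choose an open $G_\delta\supset N$ with $\dmeas(G_\delta)<\delta$. The set $D_c\backslash G_\delta$ is compact (as $G_\delta$ is open) and lies in $\dspace\backslash N$, so $(\pfpriordensa)$ is uniformly bounded and a.e.c.\ there and hence a.u.e.c.\ by compactness; Theorem~2 of \cite{Sweeting_86} then gives $\pfpriordensa(q)\to\pfpriordens(q)$ in $L^\infty(D_c\backslash G_\delta)$, and so in $L^r(D_c\backslash G_\delta)$ for every $1\leq r\leq\infty$ because $D_c$ has finite $\dmeas$-measure. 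For $1\leq r<\infty$ one then passes to all of $D_c$ as in the proof of Lemma~\ref{lem:pf_prior_almostauec}, but without needing a uniform-integrability hypothesis: the a.e.\ bound $M$ on both $\pfpriordensa(q)$ and $\pfpriordens(q)$ yields $\norm{\pfpriordensa(q)-\pfpriordens(q)}_{L^r(G_\delta\cap D_c)}\leq 2M(\dmeas(G_\delta\cap D_c))^{1/r}\leq 2M\delta^{1/r}$, so the $L^r(D_c)$ norm of the difference is at most this plus the vanishing $L^r(D_c\backslash G_\delta)$ norm, and letting $\delta\downarrow 0$ (so that $\delta^{1/r}\to 0$) completes the argument. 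This is exactly what the remark preceding the corollary means by ``the uniform integrability in $L^p$ is now a given.''

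For~\eqref{limit:lam_pf}, under the added hypothesis that $\dspace$ is compact, one runs the proof of Theorem~\ref{thm:lam_pf_D} unchanged with $r=p$ and $D_c=\dspace$: choose a Lipschitz $\widetilde{\pfpriordens}$ with $\norm{\pfpriordens(q)-\widetilde{\pfpriordens}(q)}_{L^p(\dspace)}<\epsilon/4$ via Lemma~\ref{lem:lipschitz_approx}, apply the triangle inequality four times as in~\eqref{eq:pf_error_4ae_D}, bound the purely Lipschitz term by $C\norm{\qoia-\qoi}_{L^p(\pspace)}\to 0$, rewrite the two approximation terms over $\pspace$ as norms over $\dspace$ through $\norm{f(\qoia)-g(\qoia)}_{L^p(\pspace)}=\norm{f(q)-g(q)}_{L^p(\dspace)}$ and bound each by $\epsilon/4$, and bound the remaining term $\norm{\pfpriordensa(q)-\pfpriordens(q)}_{L^p(\dspace)}$ by $\epsilon/4$ for $n$ large using~\eqref{limit:q_pf} with $D_c=\dspace$.

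The one genuinely non-mechanical ingredient is the opening move of~\eqref{limit:q_pf}: the null set $N$ on which a.e.c.\ may fail pointwise must first be fattened to an \emph{open} superset of arbitrarily small measure, so that its complement inside the compact $D_c$ is again compact and the pointwise a.e.c.\ there can be upgraded to a.u.e.c.\ by the standard finite-covering argument (the same step used tacitly in the proof of Lemma~\ref{lem:pf_prior_almostauec_1}). This is also where care is needed for the $L^\infty(D_c)$ assertion, since the leftover set $G_\delta\cap D_c$ has positive rather than zero measure; the $1\leq r<\infty$ conclusions survive precisely because the a.e.\ bound $M$ makes the $G_\delta\cap D_c$ contribution $O(\delta^{1/r})\to 0$. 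Aside from this, the argument is a direct transcription of the proofs already given, with the uniform-integrability hypothesis of Lemma~\ref{lem:pf_prior_almostauec} now supplied for free.
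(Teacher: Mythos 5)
Your proposal follows exactly the route the paper intends (and omits as ``straightforward''): re-run the proof of Lemma~\ref{lem:pf_prior_almostauec_1} with the exceptional set now null, observe that the a.e.~uniform bound $M$ makes the family uniformly integrable in every $L^r(\dspace)$ so that the extra hypothesis of Lemma~\ref{lem:pf_prior_almostauec} is automatic (this is precisely what ``the uniform integrability in $L^p$ is now a given'' means), and then transcribe the proof of Theorem~\ref{thm:lam_pf_D} verbatim. Your treatment of conclusion (1), of \eqref{limit:ptwise_pf}, of \eqref{limit:q_pf} for $1\leq r<\infty$, and of \eqref{limit:lam_pf} is correct, and the explicit bound $2M(\dmeas(G_\delta\cap D_c))^{1/r}$ is exactly the substitute for the uniform-integrability step.

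The one point you flag but do not close is the $r=\infty$ endpoint of \eqref{limit:q_pf}, and this is a genuine issue --- arguably with the corollary as stated rather than with your argument. The a.e.~form of Assumption~\ref{assump:almost_aec} gives a.e.c.~at every point of $D_c\backslash N$ with $N$ null, but $D_c\backslash N$ need not be compact, so the finite-cover upgrade to a.u.e.c.~(and hence Theorem 2 of \cite{Sweeting_86}) is available only on $D_c\backslash G_\delta$ for open $G_\delta\supset N$ of small positive measure; the essential supremum over the leftover set $G_\delta\cap D_c$ is controlled only by $2M$, which does not vanish as $\delta\downarrow 0$ because the rate of convergence on $D_c\backslash G_\delta$ deteriorates as $\delta$ shrinks. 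Indeed, a uniformly bounded sequence of ``bumps'' of height $1$ whose supports accumulate at a single point of $N$ is a.e.c.~off $N$ and converges pointwise to $0$, yet has essential supremum equal to $1$ for every $n$; so the $L^\infty(D_c)$ conclusion genuinely requires a.e.c.~at \emph{every} point of $D_c$ (i.e., $N=\emptyset$) or some additional hypothesis. Since \eqref{limit:lam_pf} and all downstream inverse-problem results use only $r=p<\infty$, nothing later is affected, but in your write-up you should either restrict \eqref{limit:q_pf} to $1\leq r<\infty$ or state explicitly the stronger (everywhere) hypothesis under which the $r=\infty$ case holds.
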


\section{Inverse problem analysis}\label{sec:inverse-theory}
In this section we analyze the convergence of probability density functions obtained from solving the inverse problem using approximate maps that converge to the exact map in the $L^p$ sense where $p$ is fixed and $1\leq p <\infty$.

\subsection{Problem definition and solution}
We begin by defining and summarizing the inverse problem and its solution considered in this work.
For a more thorough discussion on this inverse problem and the theory of existence, uniqueness, and stability of solutions, we direct the interested reader to \cite{BJW18a}, which also includes a comparison to alternative formulations and solutions of inverse problems including a specific comparison to the popular Bayesian formulations of inverse problems  \cite{Stuart_AN_2010, Kaipio_S_JCAM_2007, Berger_TEST_1994, Bernardo1994, Robert2001, Gelman2013, Jaynes1998}.
\begin{definition}[Inverse Problem and Consistent Measure]\label{def:inverse-problem}
Given a probability measure $\obsmeas$ on $(\dspace, \dborel)$ that is absolutely continuous with respect $\dmeas$ and admits a density $\obsdens$,
the inverse problem is to determine a probability measure $P_\pspace$ on $(\pspace, \pborel)$ that is absolutely continuous with respect to $\pmeas$ and admits a probability density
$\pi_\pspace$,
such that the subsequent push-forward measure induced by the map, $Q(\lambda)$, satisfies
\begin{equation}\label{eq:invdefn}
P_\pspace(Q^{-1}(A)) = P^{Q}_\dspace(A) = \obsmeas(A),
\end{equation}
for any $A\in \dborel$.
We refer to any probability measure $P_\pspace$ that satisfies \eqref{eq:invdefn} as a {\bf consistent} solution to the inverse problem.
\end{definition}

Given an {\em \cbprior} density, we make the following predictability assumption based on the push-forward densities of the initial density that ensures the solvability of the inverse problem using either the approximate or exact QoI maps.

\begin{assumption}\label{assump:dom}
There exists $C>0$ such that for any $n\in\mathbb{N}$ and a.e.~$q\in \dspace$, $\obsdens(q)\leq C\pfpriordens(q)$ and $\obsdens(q)\leq C\pfpriordensa(q)$.
\end{assumption}

This assumption is a modified form of the predictability assumptions previously used in \cite{BJW18a} and \cite{BJW18b}.
This is referred to as a predictability assumption because it ensures that QoI data that are likely to be observed are also likely to be predicted by the push-forward densities associated with the initial density.
It is important to note that this is an assumption on {\em both} the initial density and the QoI maps.

From a computational perspective, this form of the predictability assumption ensures that rejection sampling is a viable numerical approach to generate independent identically distributed (iid) sets of samples from a consistent solution to the inverse problem as established in \cite{BJW18a}.
Specifically, we can first generate a set of iid~samples from the initial density and then propagate these samples to the output space using any fixed QoI map.
This defines a set of {\em proposal samples} in the output space.
Then, we can perform rejection sampling using the {\em target} observed density.
This defines a set of accepted samples in the output space along with a corresponding set of parameter samples.
The accepted samples in the output space are iid~samples from the observed density.
Subsequently, the corresponding set of parameter samples are iid~samples from a consistent solution defined as the {\em updated density} to emphasize how the information from the QoI map has effectively updated the initial density.
This updated density has a closed form expression that follows by an application of the Disintegration Theorem \cite{Dellacherie_Meyer} for a QoI map satisfying a predictability assumption.
\begin{theorem}[Existence and Uniqueness of Solutions~\cite{BJW18a}]\label{thm:consistent_posterior}
Given an \cbprior~probability measure $\priormeas$ on $(\pspace,\pborel)$ with an associated density $\priordens$ and a QoI map $Q$ that satisfies a predictability assumption, the updated probability measure $\postmeas$ on $(\pspace, \pborel)$ defined by
\begin{equation}\label{eq:consistent_posterior}
	\postmeas(A) = \int_{\dspace} \bigg(  \int_{A\cap Q^{-1}(q)} \, \priordens(\lambda)\frac{\obsdens(Q(\lambda))}{\pfpriordens(Q(\lambda))} d\mu_{\pspace,q}(\lambda) \bigg) \, d\dmeas(q), \ \forall A\in\pborel
\end{equation}
is a consistent solution to the inverse problem in the sense of \eqref{eq:invdefn} and is uniquely determined for a given \cbprior~probability measure $\priormeas$ on $(\pspace,\pborel)$.
Here, $\mu_{\pspace,q}$ denotes the disintegrated measure of $\pmeas$\footnote{For those unfamiliar with disintegrations of measures, it is helpful to think of this like a nonlinear version of Fubini's theorem.}.
\end{theorem}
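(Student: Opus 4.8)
The plan is to follow the disintegration-based argument of \cite{BJW18a}: first extract the identity linking the push-forward density $\pfpriordens$ to the fiber measures $\mu_{\pspace,q}$, then use it to check that the candidate measure $\postmeas$ in \eqref{eq:consistent_posterior} is a probability measure absolutely continuous with respect to $\pmeas$ whose push-forward under $Q$ reproduces $\obsmeas$, and finally argue uniqueness from the fact that a measure on $(\pspace,\pborel)$ is pinned down by its $Q$-marginal together with its conditionals on the fibers $Q^{-1}(q)$.

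First I would apply the Disintegration Theorem to $\pmeas$ relative to $Q$, obtaining the family $\set{\mu_{\pspace,q}}$ supported on $Q^{-1}(q)$ with $\int_\pspace g\, d\pmeas = \int_\dspace \big(\int_{Q^{-1}(q)} g\, d\mu_{\pspace,q}\big)\, d\dmeas(q)$ for nonnegative measurable $g$. Choosing $g=\priordens\,\mathbf{1}_{Q^{-1}(E)}$ and comparing with the definition of the push-forward measure gives
\begin{equation*}
  \int_E \pfpriordens(q)\, d\dmeas(q) = \priormeas(Q^{-1}(E)) = \int_E \Big( \int_{Q^{-1}(q)} \priordens\, d\mu_{\pspace,q}\Big)\, d\dmeas(q), \quad \forall E\in\dborel,
\end{equation*}
so $\pfpriordens(q) = \int_{Q^{-1}(q)} \priordens\, d\mu_{\pspace,q}$ for $\dmeas$-a.e.\ $q$. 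Assumption~\ref{assump:dom} guarantees $\pfpriordens(q)>0$ whenever $\obsdens(q)>0$, so the ratio $\obsdens/\pfpriordens$ appearing in \eqref{eq:consistent_posterior} is finite (indeed bounded by $C$) where it is nonzero, and the integrand there is nonnegative.

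Next I would verify consistency. Fixing $E\in\dborel$ and taking $A=Q^{-1}(E)$ in \eqref{eq:consistent_posterior}, on the fiber $Q^{-1}(q)$ one has $Q(\lambda)=q$, so $\obsdens(Q(\lambda))/\pfpriordens(Q(\lambda))$ is the constant $\obsdens(q)/\pfpriordens(q)$, while $Q^{-1}(E)\cap Q^{-1}(q)$ equals $Q^{-1}(q)$ for $q\in E$ and is empty otherwise; using the identity just derived,
\begin{equation*}
  \postmeas(Q^{-1}(E)) = \int_E \frac{\obsdens(q)}{\pfpriordens(q)}\Big(\int_{Q^{-1}(q)}\priordens\, d\mu_{\pspace,q}\Big)\, d\dmeas(q) = \int_E \obsdens(q)\, d\dmeas(q) = \obsmeas(E),
\end{equation*}
which is \eqref{eq:invdefn}; taking $E=\dspace$ gives $\postmeas(\pspace)=\obsmeas(\dspace)=1$, and reassembling the disintegration shows $\postmeas(A)=\int_A \priordens(\lambda)\,\obsdens(Q(\lambda))/\pfpriordens(Q(\lambda))\, d\pmeas(\lambda)$, so $\postmeas$ is a probability measure absolutely continuous with respect to $\pmeas$ with the stated density. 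For uniqueness I would invoke again that the Disintegration Theorem determines a measure from its push-forward and its fiberwise conditionals: the construction fixes the push-forward to be $\obsmeas$, and because the reweighting on $Q^{-1}(q)$ is by the constant $\obsdens(q)/\pfpriordens(q)$ that cancels upon renormalization by $\pfpriordens(q)$, the conditional law of $\postmeas$ on each fiber coincides with that of $\priormeas$; hence exactly one consistent solution retains the conditional structure of the given initial measure, and it is independent of the particular version of the disintegration since disintegrations are $\dmeas$-a.e.\ unique.

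The step I expect to be the main obstacle is the uniqueness clause. Consistent solutions in the sense of \eqref{eq:invdefn} are in general far from unique, so ``uniquely determined for a given initial probability measure'' must be read as uniqueness within the class of solutions whose conditionals on the contours $Q^{-1}(q)$ match those of $\priormeas$, and making this rigorous requires the full existence-and-essential-uniqueness content of the Disintegration Theorem on the Euclidean/Polish spaces at hand rather than any elementary manipulation. A secondary technical point is to ensure the disintegration of $\pmeas$ is taken against the appropriate reference measure on $\dspace$ so that $\pfpriordens(q)=\int_{Q^{-1}(q)}\priordens\, d\mu_{\pspace,q}$ holds literally $\dmeas$-a.e., which is where absolute continuity of the relevant push-forwards with respect to $\dmeas$ is used.
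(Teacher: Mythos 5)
The paper states this theorem without proof, deferring entirely to \cite{BJW18a}, and your argument correctly reconstructs the disintegration-based proof given there: the key identity $\pfpriordens(q)=\int_{Q^{-1}(q)}\priordens\,d\mu_{\pspace,q}$ for $\dmeas$-a.e.~$q$, the resulting cancellation showing $\postmeas(Q^{-1}(E))=\obsmeas(E)$, and the correct reading of ``uniquely determined for a given initial measure'' as uniqueness within the class of consistent solutions whose fiberwise conditionals agree ($\dmeas$-a.e.) with those of $\priormeas$. This matches the approach of the cited reference, so no further comparison is needed.
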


The updated density to the exact QoI map is then identified as
\begin{equation}\label{eq:postpdf}
\postdens(\lambda) = \priordens(\lambda)\frac{\obsdens(Q(\lambda))}{\pfpriordens(Q(\lambda))}, \quad \lambda \in \Lambda,
\end{equation}
which we often write as
\begin{equation}\label{eq:postpdfr}
	\postdens(\lambda) = \priordens(\lambda)r(\lambda), \quad \text{with} \ r(\lambda) := \frac{\obsdens(\qoi)}{\pfpriordens(\qoi)}.
\end{equation}
Here, the ratio denoted by $r(\lambda)$ has a practical interpretation as a {\em rejection ratio} for a randomly generated sample from $\priordens$.

Similarly, for any of the approximate QoI maps that satisfy the predictability assumption, the approximate updated densities are expressed as
\begin{equation}\label{eq:postpdfa}
\postdensa(\lambda) := \priordens(\lambda)\frac{\obsdens(\qoia)}{\pfpriordensa(\qoia)} = \priordens(\lambda)\ra, \quad \text{with} \ \ra := \frac{\obsdens(\qoia)}{\pfpriordensa(\qoia)}.
\end{equation}
The error in the total variation metric of the approximate \cbposterior~density is given by
\begin{equation}\label{eq:pdf_error}
	\norm{\postdensa(\lambda) - \postdens(\lambda)}_{L^1(\pspace)}  = \int_\pspace \priordens(\lambda)\abs{\ra - r(\lambda)}\, d\pmeas = \int_\pspace \abs{\ra - r(\lambda)} \, d\priormeas.
\end{equation}
The main takeaway from the inverse analysis of \cite{BJW18b} is that the total variation error goes to zero if the approximate QoI maps converge to the exact QoI map in the $L^\infty$ sense.
In this work, we assume the approximate maps converge in the $L^p$ sense and show that the approximate updated densities converge in the $L^p$ sense.

\subsection{Convergence analysis}

The following lemma is interpreted as providing a specific form for the subset of the data space on which the family of probability measures defined by the exact push-forward and the tail-end of the approximate push-forward probability measures (defined by their densities) is considered tight.

\begin{lemma}\label{lem:existence_of_constant}
Suppose $1\leq p<\infty$ and $\qoia\to\qoi$ in $L^p(\pspace)$.
If Assumption~\ref{assump:almost_aec} holds, then for any $\delta>0$, there exists $a>0$, compact $D_a\in\dborel$ and $N>0$ such that for any $n>N$
\begin{align}
\pfpriordens(q) > a,&\  \pfpriordensa(q) > a,\ \ \  \forall q\in D_a \label{eq:constant_bd1}\\
\int_{D_a} \pfpriordens(q)\, d\dmeas > 1-\delta &\text{ , } \int_{D_a} \pfpriordensa(q)\, d\dmeas > 1-\delta \label{eq:constant_bd2}
\end{align}
\end{lemma}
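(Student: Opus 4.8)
The plan is to construct $D_a$ as the intersection of three nested ``large'' sets — a super-level set of $\pfpriordens$ on which it has a strictly positive lower bound, a compact inner approximation of that set, and the further subset on which Lemma~\ref{lem:pf_prior_almostauec_1} yields genuine uniform convergence — and to take $a$ to be the lower-bound constant, shrunk just enough to absorb the uniform error between $\pfpriordensa(q)$ and $\pfpriordens(q)$.

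First I would use that $\pfpriordens$ is a probability density: the super-level sets $E_b:=\{q\in\dspace:\pfpriordens(q)>2b\}$ increase to $\{\pfpriordens>0\}$ as $b\downarrow 0$, so $\int_{E_b}\pfpriordens\,d\dmeas\to 1$ by monotone convergence, while $\dmeas(E_b)\le (2b)^{-1}<\infty$ by Chebyshev. Fix $b>0$ with $\int_{E_b}\pfpriordens\,d\dmeas>1-\delta/3$ and set $E:=E_b$. Since $\pfpriordens\in L^1(\dspace)$, absolute continuity of the integral supplies $\eta>0$ with $\dmeas(A)<\eta\Rightarrow\int_A\pfpriordens\,d\dmeas<\delta/3$; applying the inner-regularity result recalled just before the proof of Lemma~\ref{lem:pf_prior_almostauec_1} to the finite-measure set $E$ produces a compact $K\subset E$ with $\dmeas(E\setminus K)<\eta$, so $\int_K\pfpriordens\,d\dmeas>1-2\delta/3$ and $\pfpriordens>2b$ (a.e.) on $K$.

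Next I would invoke Lemma~\ref{lem:pf_prior_almostauec_1} with $D_c=K$ and $r=\infty$: $\pfpriordensa(q)\to\pfpriordens(q)$ almost in $L^\infty(K)$, and, using the remark that the exceptional set may be taken open, pick an open $B$ with $\dmeas(B)$ small enough (again via absolute continuity of the integral) that $\int_B\pfpriordens\,d\dmeas<\delta/3$ and $\norm{\pfpriordensa(q)-\pfpriordens(q)}_{L^\infty(K\setminus B)}\to 0$. Put $D_a:=K\setminus B$, which is closed and bounded, hence compact, and put $a:=b$. Then $\int_{D_a}\pfpriordens\,d\dmeas\ge\int_K\pfpriordens\,d\dmeas-\int_B\pfpriordens\,d\dmeas>1-\delta$ and $\pfpriordens>2b>a$ on $D_a$; choosing $N_1$ so that $\norm{\pfpriordensa(q)-\pfpriordens(q)}_{L^\infty(D_a)}<b$ for $n>N_1$ gives $\pfpriordensa(q)>\pfpriordens(q)-b>a$ on $D_a$, which is \eqref{eq:constant_bd1}. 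Finally, since $\dmeas(D_a)\le\dmeas(E)\le(2b)^{-1}<\infty$, the estimate $\big|\int_{D_a}\pfpriordensa\,d\dmeas-\int_{D_a}\pfpriordens\,d\dmeas\big|\le\dmeas(D_a)\,\norm{\pfpriordensa-\pfpriordens}_{L^\infty(D_a)}\to 0$ yields $\int_{D_a}\pfpriordensa\,d\dmeas>1-\delta$ for $n$ beyond some $N_2$; taking $N=\max(N_1,N_2)$ completes \eqref{eq:constant_bd2}.

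The point one must not overlook is that this lemma deliberately does not assume uniform integrability of $(\pfpriordensa)$, so the tail mass $\int_{\dspace\setminus D_a}\pfpriordensa\,d\dmeas$ cannot be bounded head-on; the resolution is that uniform convergence on the \emph{finite-measure} compact set $D_a$ already forces $\int_{D_a}\pfpriordensa\,d\dmeas\to\int_{D_a}\pfpriordens\,d\dmeas$. Apart from that, the work is bookkeeping: tracking three nested small exceptional sets, converting ``small $\dmeas$-measure'' into ``small $\pfpriordens$-mass'' through absolute continuity of the integral, and using the openness of the exceptional set from Lemma~\ref{lem:pf_prior_almostauec_1} to keep $D_a$ compact. (If ``$\forall q\in D_a$'' is to be read literally rather than a.e., one also replaces $\pfpriordens$ by its Lusin-continuous representative on $K$ and enlarges $B$ to absorb the discrepancy.)
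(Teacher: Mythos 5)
Your proposal is correct and follows essentially the same route as the paper's proof: isolate a finite-measure compact set on which $\pfpriordens$ exceeds a fixed positive constant and carries all but a small fraction of the mass, strip off a small open exceptional set so that Sweeting's Theorem 2 (invoked by you through Lemma~\ref{lem:pf_prior_almostauec_1} with $r=\infty$, and by the paper directly via a.u.e.c.\ on the compact remainder) gives uniform convergence there, and then absorb the uniform error into both the pointwise lower bound and the integral bound using the finiteness of $\dmeas(D_a)$. The only differences are cosmetic (Chebyshev for the finite measure of the super-level set, and an explicit remark on the a.e.\ representative), so no further comment is needed.
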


Before we prove this lemma,  we recall a few standard results from measure theory to simplify the first few steps of the proof.
First, if $(X,\mathcal{B}_X,\mu_X)$ is a measure space and  $f\in L^1(X)$, then for any $\epsilon>0$ there exists $a>0$ such that
\begin{equation*}
	\int_{\set{x\, : \, \abs{f(x)}>a}} \abs{f(x)}\, d\mu_X > \int_X \abs{f(x)}\, d\mu_X - \epsilon.
\end{equation*}
Moreover, if the measure space is $\sigma$-finite, then there exists $A\in\mathcal{B}_X$ such that $\mu_X(A)<\infty$ and
\begin{equation*}
	\int_A \abs{f(x)}\, d\mu_X > \int_X \abs{f(x)}\, d\mu_X - \epsilon.
\end{equation*}
Combining these results, it is possible to choose $a>0$ and $A$ compact such that
\begin{equation*}
	\int_A \abs{f(x)}\, d\mu_X > \int_X \abs{f(x)}\, d\mu_X - \epsilon, \text{ and } \abs{f(x)} > a \ \forall x\in A.
\end{equation*}
We also make use of the standard measure theory results involving approximating sets of finite-measure with open or compact sets as discussed following Lemma~\ref{lem:pf_prior_almostauec_1}.

\begin{proof}
Let $0<\delta<1$.
Following the discussion above, there exists $a>0$ and compact $D\in \dborel$ (so $\dmeas(D)<\infty$) such that
\begin{equation*}
\int_{D} \pfpriordens(q)\, d\dmeas>1-\frac{\delta}{4}, \text{ and } \pfpriordens(q) > 2a, \  \forall q\in D.
\end{equation*}
Using Assumption~\ref{assump:almost_aec}, choose $\eta>0$ sufficiently small and $N_{\eta}$ an open set such that the sequence of approximate push-forwards is a.e.c.~on $D_a:= D\backslash N_{\eta}$, $\dmeas(D_a) > \dmeas(D) - \eta$, and
\begin{equation*}
\int_{D_a} \pfpriordens(q)\, d\dmeas>1-\frac{\delta}{2}, \text{ and } \pfpriordens(q) > 2a, \  \forall q\in D_a.
\end{equation*}

Let $0< \epsilon<\min\{a, \frac{\delta}{2\dmeas(D_a)}\}$.
By design, $D_a$ is itself compact, so the sequence of approximate push-forwards is in fact a.u.e.c.~on $D_a$.
Thus, by application of Theorem 2 in \cite{Sweeting_86} on $D_a$ there exists $N>0$ such that for any $n>N$ and $q\in D_a$,
\[
-\epsilon < \pfpriordensa(q) - \pfpriordens(q) <\epsilon.
\]
Then, $q\in D_a$ implies $\pfpriordensa(q) > \pfpriordens(q) - \epsilon > a$, which proves \eqref{eq:constant_bd1}.

Finally, for any $n>N$,
\begin{align*}
\int_{D_a} \pfpriordensa(q)\, d\dmeas &= \int_{D_a} \left[\pfpriordensa(q)-\pfpriordens(q)\right]\, d\dmeas + \int_{D_a} \pfpriordens(q)\, d\dmeas \\
&> -\epsilon\dmeas(D_a) + 1-\frac{\delta}{2} \\
&> 1- \delta.
\end{align*}
This proves \eqref{eq:constant_bd2}.
\end{proof}

In actuality, a weaker conclusion than this lemma provides is needed to prove Theorem~\ref{thm:posterior_convergence} below.
Specifically, it is sufficient to prove that there exists a sequence of subsets in the data space associated with each $\pfpriordensa$ containing ``most'' of the probability for each density while simultaneously bounding each density from below.
However, the existence of a common data set not only serves to simplify the notation in the proof of the following theorem (we can avoid applying a subscript $n$ to sets), but it also provides a useful conceptualization of this set in terms of a tightness property of the densities.
This common set also implies a type of ``effective support'' containing ``most of the predicted probability'' for both the exact push-forward density and the tail-end of the sequence of approximate push-forward densities.
With this lemma in-hand, we now state the main result of this section giving the convergence of approximate updated densities in an $L^p$ sense when the approximate maps converge in an $L^p$ sense.

\begin{theorem}[Convergence of \cbposterior~densities]\label{thm:posterior_convergence}
Suppose $1\leq p<\infty$ and $\qoia\to \qoi$ in $L^p(\pspace)$.
If $\priordens\in L^p(\pspace)$, $\obsdens\in L^p(\dspace)$, Assumptions~\ref{assump:almost_aec} and \ref{assump:dom} hold,  and the family of push-forward densities defined by these maps are uniformly integrable in $L^p(\dspace)$, then
	\begin{equation}\label{eq:post_conv}
	\postdensa \to \postdens \ \text{ in } L^p(\pspace).
	\end{equation}
\end{theorem}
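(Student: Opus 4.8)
The plan is to follow the template of the proof of Theorem~\ref{thm:lam_pf_D}, now applied to the updated densities $\postdensa=\priordens\,\ra$ and $\postdens=\priordens\,r(\lambda)$, so that $\norm{\postdensa-\postdens}_{L^p(\pspace)}=\norm{\priordens\,(\ra-r(\lambda))}_{L^p(\pspace)}$. The key starting observation is that Assumption~\ref{assump:dom} forces $\ra=\obsdens(\qoia)/\pfpriordensa(\qoia)\le C$ and $r(\lambda)=\obsdens(\qoi)/\pfpriordens(\qoi)\le C$ a.e., hence $\abs{\ra-r(\lambda)}\le 2C$ and $\abs{\postdensa-\postdens}\le 2C\priordens\in L^p(\pspace)$, uniformly in $n$. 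Together with the facts that $\priordens$ is a probability density (so $\priordens\in L^1(\pspace)\cap L^p(\pspace)$) and $\obsdens\in L^p(\dspace)$, this pointwise domination is what lets us trade control of small sets for control of integrals.

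Fix $\epsilon>0$. First I would use Lemma~\ref{lem:lipschitz_approx} to pick a nonnegative, bounded, compactly supported Lipschitz $\widetilde{\priordens}$ close to $\priordens$ in both $L^1(\pspace)$ and $L^p(\pspace)$; by $\abs{\ra-r(\lambda)}\le 2C$ it then suffices to estimate $\norm{\widetilde{\priordens}\,(\ra-r(\lambda))}_{L^p(\pspace)}$. Next, apply Lemma~\ref{lem:existence_of_constant} with a parameter $\delta>0$ (to be fixed after $\widetilde{\priordens}$) to obtain $a>0$, a compact $D_a\in\dborel$, and $N_0$ such that $\pfpriordens>a$ and $\pfpriordensa>a$ on $D_a$ and $\int_{D_a}\pfpriordens\,d\dmeas,\int_{D_a}\pfpriordensa\,d\dmeas>1-\delta$ for $n>N_0$, and split $\pspace=G_n\cup(\pspace\setminus G_n)$ with $G_n:=\qoi^{-1}(D_a)\cap\qoia^{-1}(D_a)$. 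On $\pspace\setminus G_n$ I would use $\widetilde{\priordens}^p\abs{\ra-r(\lambda)}^p\le(2\norm{\widetilde{\priordens}}_{\infty}C)^{p-1}\,\widetilde{\priordens}\,(\ra+r(\lambda))$ together with $\ra,r(\lambda)\le C$, the bounds $\int_{\pspace\setminus\qoi^{-1}(D_a)}\priordens\,d\pmeas=1-\int_{D_a}\pfpriordens\,d\dmeas<\delta$ and $\int_{\pspace\setminus\qoia^{-1}(D_a)}\priordens\,d\pmeas=1-\int_{D_a}\pfpriordensa\,d\dmeas<\delta$ (both from~\eqref{eq:constant_bd2}), and $\norm{\widetilde{\priordens}-\priordens}_{L^1(\pspace)}$ small; since $\pspace\setminus G_n\subseteq(\pspace\setminus\qoi^{-1}(D_a))\cup(\pspace\setminus\qoia^{-1}(D_a))$, this bounds the $\pspace\setminus G_n$ contribution by $O(\delta)+O(\epsilon)$.

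On $G_n$ the denominators $\pfpriordens(\qoi)$, $\pfpriordens(\qoia)$ and $\pfpriordensa(\qoia)$ all exceed $a$, so division is safe. I would bring in a Lipschitz approximant $\widetilde{\obsdens}$ of $\obsdens$ in $L^p(\dspace)$ and a Lipschitz approximant $\psi$ (extended to a globally Lipschitz function) of $1/\pfpriordens$, which is continuous and bounded by $1/a$ on the compact set $D_a$, and expand $\ra-r(\lambda)$ on $G_n$ as a telescoping sum of: two numerator-replacement terms controlled pointwise by $a^{-1}\abs{\obsdens(\qoia)-\widetilde{\obsdens}(\qoia)}$ and $a^{-1}\abs{\obsdens(\qoi)-\widetilde{\obsdens}(\qoi)}$; the denominator-discrepancy term $\widetilde{\obsdens}(\qoia)\,[\pfpriordens(\qoia)-\pfpriordensa(\qoia)]/[\pfpriordens(\qoia)\pfpriordensa(\qoia)]$, of size at most $a^{-2}\norm{\widetilde{\obsdens}}_{\infty}\norm{\pfpriordens-\pfpriordensa}_{L^\infty(D_a)}$; two terms measuring $1/\pfpriordens-\psi$ on $D_a$, each of size at most $\norm{\widetilde{\obsdens}}_{\infty}\norm{1/\pfpriordens-\psi}_{L^\infty(D_a)}$; and the single genuinely convergent term $\widetilde{\obsdens}(\qoia)\psi(\qoia)-\widetilde{\obsdens}(\qoi)\psi(\qoi)$, bounded by $\mathrm{Lip}(\widetilde{\obsdens}\psi)\,\abs{\qoia-\qoi}$. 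Multiplying each by $\widetilde{\priordens}$ and taking $L^p(\pspace)$ norms: the numerator-replacement terms become $\le a^{-1}\norm{\widetilde{\priordens}}_{\infty}\norm{\obsdens-\widetilde{\obsdens}}_{L^p(\dspace)}$ after the change-of-variables identity used in the proof of Theorem~\ref{thm:lam_pf_D}; the discrepancy term tends to $0$ since $\pfpriordensa\to\pfpriordens$ in $L^\infty(D_a)$ by Theorem~2 of~\cite{Sweeting_86} on the compact set $D_a$ on which $(\pfpriordensa)$ is a.u.e.c.\ (the same step as in the proof of Lemma~\ref{lem:existence_of_constant}; alternatively via Lemma~\ref{lem:pf_prior_almostauec}, which is where the $L^p(\dspace)$ uniform-integrability hypothesis enters); the two $\psi$-terms are $\le\norm{\widetilde{\priordens}}_{L^p(\pspace)}\norm{\widetilde{\obsdens}}_{\infty}\norm{1/\pfpriordens-\psi}_{L^\infty(D_a)}$; and the last term is $\le\norm{\widetilde{\priordens}}_{\infty}\mathrm{Lip}(\widetilde{\obsdens}\psi)\norm{\qoia-\qoi}_{L^p(\pspace)}\to 0$. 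Choosing the constants in the order $\widetilde{\priordens}$, then $\delta$ (hence $a,D_a,N_0$), then $\widetilde{\obsdens}$, then $\psi$, then $N\ge N_0$, and summing these estimates gives $\norm{\postdensa-\postdens}_{L^p(\pspace)}<\epsilon$ for all $n>N$.

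The hard part is the bookkeeping in the last step. Every use of $\qoia\to\qoi$ must be routed strictly through a Lipschitz function, via a $\mathrm{Lip}\cdot\norm{\qoia-\qoi}_{L^p(\pspace)}$ estimate, precisely because $\qoia\to\qoi$ only in $L^p$ and not almost everywhere, so any argument resting on a.e.\ convergence would reach only a subsequence --- this is exactly the difficulty the $L^\infty$-based proof in \cite{BJW18b} did not face. At the same time, the push-forward denominators must be kept away from zero through the compact ``effective support'' $D_a$ furnished by Lemma~\ref{lem:existence_of_constant}, and the probability mass escaping $D_a$ must be controlled in $L^p$ and not merely in $L^1$; it is here that the uniform integrability of the push-forward densities in $L^p(\dspace)$, together with the pointwise domination $\postdensa\le C\priordens$ and $\postdens\le C\priordens$, carry the argument.
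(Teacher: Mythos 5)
Your proposal is correct and shares the paper's overall architecture: both proofs hinge on Lemma~\ref{lem:existence_of_constant} to produce the compact ``effective support'' $D_a$ with the lower bound $a$ on the denominators, both split $\pspace$ into the preimage of $D_a$ and its complement, and both dispatch the complement using the uniform bound $\abs{\ra-r(\lambda)}\le 2C$ from Assumption~\ref{assump:dom} together with the tightness estimate \eqref{eq:constant_bd2} (the paper splits on $Q_n^{-1}(D_a)$ alone rather than your intersection $G_n$, which it can afford because $\pfpriordens(\qoi)$ only ever appears in the combination $\obsdens(\qoi)/\pfpriordens(\qoi)\le C$). Where you genuinely diverge is on the good set: the paper decomposes $\abs{\ra-r(\lambda)}$ into just two terms, $T_1$ (numerator difference, controlled by the Lipschitz constant of $\obsdens$ after a WLOG reduction) and $T_2$ (controlled by Assumption~\ref{assump:dom} times $\abs{\pfpriordens(\qoi)-\pfpriordensa(\qoia)}/\pfpriordensa(\qoia)$), finishing $T_2$ by invoking $\norm{\pfpriordens(\qoi)-\pfpriordensa(\qoia)}_{L^p(\pspace)}\to 0$ from Theorem~\ref{thm:lam_pf_D}, with H\"older and the Disintegration Theorem doing the bookkeeping. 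You instead run a six-term telescoping through Lipschitz approximants of both $\obsdens$ and $1/\pfpriordens$ and control the push-forward discrepancy by uniform convergence on $D_a$ via Sweeting's Theorem~2. Both work; the paper's version is shorter and reuses Theorem~\ref{thm:lam_pf_D} as a black box, while yours is more self-contained on $D_a$ and, as a side effect, indicates that the $L^p(\dspace)$ uniform-integrability hypothesis is needed only if one routes the push-forward discrepancy through Theorem~\ref{thm:lam_pf_D} rather than through Sweeting's theorem on the compact effective support. Two small corrections to your commentary: (i) uniform integrability is not what controls the mass escaping $D_a$ --- in both proofs that is handled by $\ra, r(\lambda)\le C$ and \eqref{eq:constant_bd2} alone; and (ii) before approximating $1/\pfpriordens$ by a Lipschitz $\psi$ you should note that $\pfpriordens$ is continuous on $D_a$, which follows from the a.u.e.c.\ property and pointwise convergence via Sweeting's results, exactly as the paper uses elsewhere.
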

\indent Without loss of generality, we prove this theorem under some additional simplifying assumptions.
Specifically, we assume that $\obsdens$ and $\priordens$ are both Lipschitz continuous and that $\dspace$ and $\pspace$ are both compact.
If this is not the case, we can carry out the analysis using ``sufficiently good'' approximations to $\obsdens$ and $\priordens$ that are Lipschitz continuous with compact support by Lemma~\ref{lem:lipschitz_approx}, and simply use triangle inequalities to prove the result for more general initial and observed densities and non-compact parameter and data spaces.

\begin{proof}
	Let $\epsilon>0$. Since $\priordens(\lambda)\in C(\pspace)$ with compact support, there exists $M>0$ such that for any $\lambda\in\pspace$
	\[
	\abs{\priordens(\lambda)} < M.
	\]
	Set $0<\delta< \frac{\epsilon^p}{2^{p+1}C^pM^{p-1}}$ (here, the $C$ is from Assumption~\ref{assump:dom}).
	By Lemma~\ref{lem:existence_of_constant}, there exists $a>0$, $D_a\in\dborel$ and $N_1>0$ such that for any $n>N_1$
	\begin{align*}
	\pfpriordens(q) > a, &\hspace{0.3cm} \pfpriordensa(q) > a, \hspace{0.3cm} \forall q\in D_a\\
	\int_{D_a} \pfpriordens(q)\, d\dmeas > 1-\delta, &\hspace{0.3cm} \int_{D_a} \pfpriordensa(q)\, d\dmeas > 1-\delta.
	\end{align*}
	Denote $\pspace_{a,n}=Q_n^{-1}(D_a)$, $\pspace_a = Q^{-1}(D_a)$.
	Then, by linearity of the integral operator,
	\begin{align*}
	\norm{\postdensa(\lambda) - \postdens(\lambda)}^p_{L^p(\pspace)} &= \int_{\pspace}  \abs{\postdensa(\lambda) - \postdens(\lambda)}^p \, d\pmeas\\
	&= \int_{\pspace_{a,n}}  \abs{\postdensa(\lambda) - \postdens(\lambda)}^p \, d\pmeas +  \int_{\pspace \backslash \pspace_{a,n}}  \abs{\postdensa(\lambda) - \postdens(\lambda)}^p \, d\pmeas \\
	&= \int_{\pspace_{a,n}} (\priordens(\lambda))^p \abs{r_n(\lambda) - r(\lambda)}^p \, d\pmeas +  \int_{\pspace \backslash \pspace_{a,n}} ( \priordens(\lambda))^p \abs{r_n(\lambda) - r(\lambda)}^p \, d\pmeas.
	\end{align*}
	Observe that the difference in ratios given by $\abs{\ra-r(\lambda)}$ can be rewritten as
	\begin{equation*}
	\abs{\ra-r(\lambda)} = \abs{\frac{\obsdens(\qoia)\pfpriordens(\qoi) - \obsdens(\qoi)\pfpriordensa(\qoia)}{\pfpriordensa(\qoia)\pfpriordens(\qoi)}}.
	\end{equation*}
	Then, by adding and subtracting $\obsdens(\qoi)\pfpriordens(\qoi)$ in the numerator, this difference is decomposed as
	\begin{equation}\label{eq:ratio_diff_decomp}
	\abs{\ra-r(\lambda)} \leq  \underbrace{\abs{\frac{\obsdens(\qoia)-\obsdens(\qoi)}{\pfpriordensa(\qoia)}}}_{T_1(\lambda)} + \underbrace{\abs{\frac{\obsdens(\qoi)\left[\pfpriordens(\qoi)-\pfpriordensa(\qoia)\right]}{\pfpriordensa(\qoia)\pfpriordens(\qoi)}}}_{T_2(\lambda)}.
	\end{equation}
	Observe that
	\begin{align*}
	\int_{\pspace_{a,n}} (\priordens(\lambda))^p\abs{\ra - r(\lambda)}^p \, d\pmeas
	&\leq \int_{\dspace} \int_{\pspace_{a,n} \cap Q_{n}^{-1}(q)} (\priordens(\lambda))^p (T_1(\lambda) + T_2(\lambda))^p \, d\mu_{\pspace,q} d\dmeas \\
	&\leq \int_{\dspace} \int_{\pspace_{a,n} \cap Q_{n}^{-1}(q)} (\priordens(\lambda))^p \left(2\max_{\lambda\in\pspace}\{T_1(\lambda), T_2(\lambda)\}\right)^p \, d\mu_{\pspace,q} d\dmeas.
	\end{align*}
	Thus, we show that $ \int_{\pspace_{a,n}} (\priordens(\lambda))^p\abs{\ra - r(\lambda)}^p \, d\pmeas < \frac{\epsilon^p}{2}$ by proving
	\[
	\int_{\dspace} \int_{\pspace_{a,n} \cap Q_{n}^{-1}(q)} (\priordens(\lambda))^p (T_1(\lambda))^p \, d\mu_{\pspace,q} d\dmeas < \frac{\epsilon^p}{2^{p+2}},
	\]
	and
	\[
	\int_{\dspace} \int_{\pspace_{a,n} \cap Q_{n}^{-1}(q)} (\priordens(\lambda))^p (T_2(\lambda))^p \, d\mu_{\pspace,q} d\dmeas < \frac{\epsilon^p}{2^{p+2}}.
	\]

	Denote the Lipschitz constant for $\obsdens$ by $C_1\geq 0$, then
	\begin{equation}
	T_1(\lambda) \leq \frac{C_1\abs{\qoi-\qoia}}{\pfpriordensa(\qoia)}.
	\end{equation}
	H\"older's inequality then implies
	\begin{align*}
	& \indent \int_{\dspace} \int_{\pspace_{a,n} \cap Q_{n}^{-1}(q)} (\priordens(\lambda))^p (T_{1}(\lambda))^p \, d\mu_{\pspace,q}(\lambda) \, d\dmeas(q) \\
	&\leq (C_1)^p \int_{\dspace} \int_{\pspace_{a,n} \cap Q_{n}^{-1}(q)} \abs{\qoi - \qoia}^p \left( \frac{\priordens(\lambda)}{\pfpriordensa(\qoia)} \right)^p \, d\mu_{\pspace,q}(\lambda) \, d\dmeas(q) \\
	&\leq (C_1)^p \int_{\dspace} \norm{\abs{\qoi-\qoia}^p}_{L^{1}(\pspace_{a,n} \cap Q_{n}^{-1}(q))} \norm{\left(\frac{\priordens(\lambda)}{\pfpriordensa (\qoia)}\right)^p}_{L^{\infty}(\pspace_{a,n} \cap Q_{n}^{-1}(q))} \, d\dmeas(q) \\
	&\leq (C_1)^p \int_{\dspace} \norm{\qoi-\qoia}^p_{L^{p}(\pspace_{a,n} \cap Q_{n}^{-1}(q))} \norm{\left(\frac{\priordens(\lambda)}{\pfpriordensa (\qoia)}\right)^p}_{L^{\infty}(\pspace_{a,n} \cap Q_{n}^{-1}(q))} \, d\dmeas(q)\\
	&\leq (C_1)^p \norm{\left(\frac{\priordens(\lambda)}{\pfpriordensa (\qoia)}\right)^p}_{L^{\infty}(\pspace_{a,n})} \int_{\dspace} \norm{\qoi-\qoia}^p_{L^{p}(\pspace_{a,n} \cap Q_{n}^{-1}(q))}   \, d\dmeas(q)
	\end{align*}
	By the Disintegration Theorem,
	\begin{align*}
	\int_{\pspace_{a,n}} \abs{\qoi - \qoia}^p\, d\pmeas &= \int_{\dspace} \int_{\pspace_{a,n}\cap Q_n^{-1}(q)} \abs{\qoi - \qoia}^p d\mu_{\pspace,q}(\lambda) \, d\dmeas(q) \\
	&=\int_{\dspace} \norm{\qoi-\qoia}^p_{L^{p}(\pspace_{a,n} \cap Q_{n}^{-1}(q))}\, d\dmeas(q).
	\end{align*}
	Then, we have
	\begin{align*}
	& \indent \int_{\dspace} \int_{\pspace_{a,n} \cap Q_{n}^{-1}(q)} (\priordens(\lambda))^p (T_{1}(\lambda))^p \, d\mu_{\pspace,q}(\lambda) \, d\dmeas(q) \\
	&\leq (C_1)^p \norm{\left(\frac{\priordens(\lambda)}{\pfpriordensa (\qoia)}\right)^p}_{L^{\infty}(\pspace_{a,n})} \int_{\pspace_{a,n}} \abs{\qoi - \qoia}^p\, d\pmeas\\
	&\leq (C_1)^p \norm{\left(\frac{\priordens(\lambda)}{\pfpriordensa (\qoia)}\right)^p}_{L^{\infty}(\pspace_{a,n})}  \norm{\qoi-\qoia}^p_{L^{p}(\pspace)}.
	\end{align*}
	By construction,
	\[
	\norm{\left( \frac{\priordens(\lambda)}{\pfpriordensa (\qoia)} \right)^p }_{L^{\infty}(\pspace_{a,n})}  \leq \frac{M^p}{a^p}.
	\]
	Since $\qoia\to \qoi$ in $L^p(\pspace)$, there exists $N_2>0$ such that for any $ n>N_2$,
	\[
	\norm{\qoia - \qoi}^p_{L^p(\pspace)} < \frac{a^p\epsilon^p}{2^{p+2} (C_1)^p M^p}.
	\]
	Combining these inequalities gives
	\[
	\int_{\dspace} \int_{\pspace_{a,n} \cap Q_{n}^{-1}(q)} (\priordens(\lambda))^p (T_{1}(\lambda))^p \, d\mu_{\pspace,q}(\lambda) \, d\dmeas(q) < \frac{\epsilon^p}{2^{p+2}}.
	\]
	We now bound the term involving $T_2(\lambda)$.
	First, rewrite Assumption~\ref{assump:dom} as $\displaystyle \frac{\obsdens(q)}{\pfpriordens(q)} \leq C$ for a.e.~$q\in \dspace$.
	Then, use H\"older's inequality and the disintegration theorem as before to get
	\begin{align*}
	& \indent \int_{\dspace} \int_{\pspace_{a,n} \cap Q_{n}^{-1}(q)} (\priordens(\lambda))^p (T_{2}(\lambda))^p \, d\mu_{\pspace,q}(\lambda) \, d\dmeas(q) \\
	&\leq C^p \int_{\dspace} \int_{\pspace_{a,n} \cap Q_{n}^{-1}(q)} \left[\pfpriordens(\qoi) - \pfpriordensa(\qoia) \right]^p \left( \frac{\priordens(\lambda)}{\pfpriordensa(\qoia)} \right)^p \, d\mu_{\pspace,q}(\lambda) \, d\dmeas(q) \\
	&\leq C^p \int_{\dspace} \norm{\pfpriordens(\qoi)-\pfpriordensa(\qoia)}^p_{L^{p}(\pspace_{a,n} \cap Q_{n}^{-1}(q))} \norm{\left(\frac{\priordens(\lambda)}{\pfpriordensa (\qoia)}\right)^p}_{L^{\infty}(\pspace_{a,n} \cap Q_{n}^{-1}(q))} \, d\dmeas(q)\\
	&\leq C^p  \norm{\left(\frac{\priordens(\lambda)}{\pfpriordensa (\qoia)}\right)^p}_{L^{\infty}(\pspace_{a,n})}  \norm{\pfpriordens(\qoi)-\pfpriordensa(\qoia)}^p_{L^{p}(\pspace)}.
	\end{align*}
	By \eqref{limit:lam_pf_D} of Theorem~\ref{thm:lam_pf_D}, $\norm{\pfpriordens(\qoi) - \pfpriordensa(\qoia)}_{L^p(\pspace)} \to 0$. 
	It follows in a similar manner as before that there exists $N_3>0$ such that for any $n>N_3$,
	\[
	\int_{\dspace} \int_{\pspace_{a,n} \cap Q_{n}^{-1}(q)} (\priordens(\lambda))^p (T_{2}(\lambda))^p \, d\mu_{\pspace,q}(\lambda) \, d\dmeas(q) < \frac{\epsilon^p}{2^{p+2}}.
	\]
	Set $N=\max\{N_1, N_2, N_3\}$.
	For any $n>N$,
	\begin{equation*}
	\int_{\pspace_{a,n}} (\priordens(\lambda))^p \abs{r_n(\lambda) - r(\lambda)}^p \, d\pmeas < \frac{\epsilon^p}{2}.
	\end{equation*}
	%

	We now bound the other error term defined on the set $\pspace\backslash\pspace_{a,n}$.
	\begin{align*}
	\int_{\pspace \backslash \pspace_{a,n}}  (\priordens(\lambda))^p \abs{r_n(\lambda) - r(\lambda)}^p \, d\pmeas
	&\leq \int_{\pspace \backslash \pspace_{a,n}} \priordens(\lambda) (2C)^pM^{p-1} \, d\pmeas\\
	&= (2C)^pM^{p-1} \int_{\dspace\backslash D_a} \pfpriordensa(q)\, d\dmeas \\
	&< (2C)^pM^{p-1}\delta \\
	&< \frac{\epsilon^p}{2}.
	\end{align*}

	Thus, the sum of the error terms for any $ n>N$ satisfies
	\begin{align*}
	\norm{\postdensa(\lambda) - \postdens(\lambda)}^p_{L^p(\pspace)}
	&= \int_{\pspace_{a,n}}  \abs{\postdensa(\lambda) - \postdens(\lambda)}^p \, d\pmeas +  \int_{\pspace \backslash \pspace_{a,n}}  \abs{\postdensa(\lambda) - \postdens(\lambda)}^p \, d\pmeas\\
	&< \frac{\epsilon^p}{2} + \frac{\epsilon^p}{2} \\
	&= \epsilon^p.
	\end{align*}
	Raising each side to the $1/p$ power finishes the proof.
\end{proof}

\section{Numerical Considerations, Computational Estimates, and Impact on Assumptions}

\subsection{Finite sampling and density estimation}\label{sec:other-errors}

We first discuss how estimating the push-forward of an initial density (for either the exact or approximate QoI maps) using straightforward finite sampling techniques impacts the solutions to the forward and inverse problems considered in this work.
Specifically, suppose that we first generate a finite set of independent identically distributed (iid) samples from the initial density.
Then, propagating this sample set through the QoI map constructs an iid sample set from the (unknown) push-forward density.
While this sample set comes from the correct push-forward density (relative to the map used), we ultimately perform analysis on the density.
In some cases, kernel density estimation (KDE) techniques work well for estimating the push-forward densities especially when $\dspace$ is low-dimensional (e.g., see~\cite{BJW18b} and the references therein).
When $\dspace$ is high-dimensional and the number of iid samples we can generate from the push-forward is limited (e.g., due to a computationally expensive QoI map), we may instead opt for parametric estimations of the density.
Whatever density estimation scheme is used introduces an error in both the solution to the forward problem and subsequently in the solution to the inverse problem.
In \cite{BJW18b}, we analyze the impact of this error on the forward and inverse solutions when the QoI maps converge in $L^\infty$.
The modifications to that analysis to use the $L^p$-norm considered in this work are mostly straightforward by substituting the appropriate KDE error bounds available in the literature.
Thus, for the sake of brevity, we omit this additional error analysis in this work.

\subsection{Assumptions: Context, Generality, and Verification}\label{sec:verifying-assumptions}

We now discuss the two main assumptions in this work primarily in the context of using numerical estimates of the approximate push-forward densities.
Below, we simply refer to these as the estimated densities to distinguish from the terminology of approximate push-forward densities previously used that are in fact ``exact'' relative to the associated approximate QoI maps.
For notational simplicity, we abuse notation and use $\pfpriordensa$ to denote the estimated densities.
For both the sake of simplicity and in light of Lemma~\ref{lem:lipschitz_approx}, we assume that Gaussian KDEs are used so that all numerical estimates of approximate densities are Lipschitz continuous.
This has an added conceptual and computational advantage, as we discuss below, in contextualizing the equicontinuity criterion of Assumption~\ref{assump:almost_aec}.
Specifically, a well-known result is that if a sequence of Lipschitz continuous functions has a common Lipschitz constant, then the sequence of functions is equicontinuous.

\subsubsection{Assumption~\ref{assump:almost_aec}}\label{sec:verifyassump1}

Recall that Assumption~\ref{assump:almost_aec} involved two criteria: uniform boundedness and an asymptotic notion of equicontinuity.
The assumption is that these criteria hold in an almost sense, which is rather permissive and allows for the theory to apply even in the presence of countably infinite numbers of singularities in push-forward densities.
For the interested reader, this is discussed further in Appendix~\ref{app:a} in the context of a simple example with a push-forward density exhibiting a singularity.
Thus, we generally expect that this assumption holds except for pathological examples.
Unfortunately, verifying any criterion holds only in an almost sense is not necessarily straightforward.
However, it is relatively straightforward to use the estimated densities to investigate if these criteria hold in the stronger a.e.~sense.
We first require some notation.
Denote by $B_{n,m}$ and $L_{n,m}$ the bound and Lipschitz constant, respectively, for the estimated density obtained from the $n$th approximate map using $m$ parameter samples.

Estimates of $(B_{n,m})$ may be obtained, for instance, by simply sorting the evaluation of density estimates associated with the $n$th map  at a set of $m$ iid samples used to form the estimated densities.
To obtain estimates of $(L_{n,m})$, one approach is to first compute linear combinations of gradients of the kernel used in the numerical estimates of the densities, restrict evaluation to the $m$ iid samples used to construct the estimate, take the norms of these gradients, and then finally apply a sorting algorithm.
For many kernels, such as the Gaussian one used in this work, gradients are easily determined using calculus.

As $n$ and $m$ are increased, convergence of $(B_{n,m})$ and $(L_{n,m})$ implies that the criteria of Assumption~\ref{assump:almost_aec} hold in an a.e.~sense.
This is demonstrated in the numerical examples of Section~\ref{sec:applications} where we use the function \verb|gaussian_kde| within the subpackage \verb|stats| of the library \verb|scipy| to estimate the densities and subsequently use the \verb|gradient| function within the library \verb|numpy| to estimate the derivatives of these estimated densities.
In Appendix~\ref{app:a}, we discuss one potential way to utilize the computations leading to (divergent) sequences of $(B_{n,m})$ and $(L_{n,m})$ to help improve accuracy in estimated densities when singularities are present, but we leave further in-depth investigation to future work.

\subsubsection{A conceptual example and Assumption~\ref{assump:dom}}\label{sec:verifyassump2}

From a measure-theoretic perspective, the so-called predictability assumption (i.e., Assumption~\ref{assump:dom}) ensures that the observed measure (defined by the observed density) is absolutely continuous with respect to the push-forward measures (defined by the push-forward densities) that are obtained using either the exact or approximate maps.
Absolute continuity of measures is equivalent to stating the existence of Radon-Nikodym derivatives (i.e., densities).
When this assumption holds, the updated measure and density exist and take the form given in~\eqref{eq:consistent_posterior}.
Since the updated density is in fact a density, its integral is in fact equal to one.
Note that this integral is easily manipulated as
\[
1= \int_{\pspace} \postdens(\lambda) \ d\pmeas = \int_{\pspace} \priordens(\lambda) r(Q(\lambda)) \ d\pmeas = \int_\pspace r(Q(\lambda)) \ d\priormeas = \mathbb{E}_i(r(\qoi)).
\]
Here, $\mathbb{E}_i$ indicates the expected value of a random variable with respect to the initial density.
To monitor if Assumption~\ref{assump:dom} is violated by any of the approximate maps, we compute Monte Carlo estimates of the value of $\mathbb{E}_i(r(\qoia))$ using the $m$ iid samples from the initial density used to construct the estimated push-forward densities.
This provides a very cheap and useful diagnostic tool~\cite{BJW18a} since values that are not close to unity indicate that Assumption~\ref{assump:dom} is not satisfied.
For the reader interested in building intuition about this diagnostic tool, see Appendix~\ref{app:a} involving a simple example with a singularity in the push-forward density so that values of $\mathbb{E}_i(r(\qoia))$ are influenced by both numerical errors and, in some cases, violations of the predictability assumption. 

\section{Numerical Examples}\label{sec:applications}


Surrogates of the QoI map can significantly reduce the computational cost of solving both forward and inverse UQ problems.
The popularity of using stochastic spectral methods, and polynomial chaos expansions (PCEs) in particular, to approximate QoI maps arising in UQ problems dates back several decades, e.g., see~\cite{GR-99, XK-02, MGK+04b, WK-06, MNR-07}.
The convergence theory of these methods dates back nearly a century starting with the homogeneous chaos introduced by Wiener~\cite{Wiener-38} with further analysis by Cameron and Martin~\cite{CM-47}.
However, the interested reader may find the far more recent reference \cite{XK-02} to be a more accessible introduction to this topic, and we simply note here that PCEs are understood to converge in an $L^2$ sense.
While the theory we developed applies to any sequence of approximate models that converge in an $L^p$ sense, we focus on numerical examples using PCEs given their prevalence in the literature.

Two numerical examples are considered showing application of the theory to commonly studied ordinary and partial differential equation models.
Moreover, we consider separate numerical approaches for generating the PCE approximations in each example.
The PCEs for the first example are extensively studied in \cite{XK-02} using an intrusive approach, which we choose to employ here for one particular PCE.
This allows the interested reader to modify the PCE used here by following the steps outlined in \cite{XK-02}.
The PCEs for the second example are obtained using a non-intrusive pseudo-spectral approach that exploits quadrature methods for accurate construction of the coefficients in the PCEs.
For more information on these approaches, we direct the interested reader to \cite{XK-02, GhanemSpanos, DebBabuskaOden} for traditional intrusive approaches, to~\cite{RNG+03, AZ-07} for non-intrusive approaches, and to \cite{2009arXiv0904.2040C} for a comparison of such different approaches. 

In both examples, we use standard Gaussian kernel density estimation (KDE) to approximate the push-forward density functions, which are subsequently used to estimate the errors in both the push-forward and updated densities.
As noted in Section~\ref{sec:other-errors}, the impact of the additional error arising from the use of KDEs is analyzed in  \cite{BJW18b} when the maps converge in $L^\infty$, but we leave the modifications of that analysis for $L^p$-norms to future work.

Two final remarks are in order.
First, while it is possible to form PCEs with respect to distributions unrelated to the initial distribution, this is in general not considered optimal (e.g., see~\cite{MNR-07} where such inefficiencies are explored in a Bayesian setting).
Thus, for both simplicity in computations and presentation, we choose to form the PCEs with respect to the initial distribution assumed for the parameters.
Second, the convergence of PCEs is in an $L^2$ sense with respect to the probability measure of the random variable used to define the orthogonal polynomials in the expansion.
Thus, we consider/interpret the initial distribution as the dominating measure in this case.

\subsection{ODE Example}\label{subsec:ode}

Consider the ordinary differential equation
\begin{equation}\label{eq:ode}
\frac{dy(t)}{dt} = - \lambda y, \indent y(0) = 1,
\end{equation}
where the decay rate coefficient $\lambda$ is treated as a random variable.
The QoI is taken as $Q(\lambda) = y(0.5, \lambda)$ where $y(t, \lambda)=e^{-\lambda t}$ is the exact solution of \eqref{eq:ode}, which has a clear dependence on the parameter value.
For this example, $\pspace = (-\infty, \infty)$ and $\dspace = (0,\infty)$. \\

\subsubsection{Convergence of forward problem}
Assume $\priordens\sim N(0,1)$.
Following \cite{XK-02}, the PCE of the solution $y(t,\lambda)$ is given by
\begin{equation*}
y(t, \lambda)=\sum\limits_{i=0}^{\infty} y_i(t, \lambda)\Phi_i(\zeta)
\end{equation*}
where $y_i$ denotes the $i$th coefficient of the expansion of $y(t,\lambda)$, and $\set{\Phi_i}$ denotes a complete orthogonal polynomial basis from the Askey scheme associated with a choice of random variable $\zeta$.
Since the initial distribution of $\lambda$ is assumed Gaussian, we choose $\set{\Phi_i}$ as the Hermite polynomials and $\zeta$ as a Gaussian distributed random variable for optimal convergence.
The approximate maps are then defined by
\begin{equation}\label{eq:pce_Q}
Q_n(\lambda) = \sum\limits_{i=0}^n y_i(0.5, \lambda)\Phi_i(\zeta)
\end{equation}
for $1\leq n\leq P$ where $P$ indicates the highest order polynomial in the expansion.
Using the intrusive approach detailed in \cite{XK-02}, a system of ordinary differential equations is formed and solved for each $n$ to obtain the coefficients $y_i(t, \lambda)$ for $0\leq i\leq P$, which are then evaluated at $t=0.5$ to obtain $Q_n(\lambda)$.
Reference results are obtained using the exact map defined by $Q(\lambda)=e^{-0.5\lambda}$ evaluated on the same set of parameter samples as the approximate maps to construct estimates of the exact push-forward and updated densities.

%

%

To numerically verify Assumption~\ref{assump:almost_aec}, we follow the approach discussed in Section~\ref{sec:verifyassump1} to compute sequences of bounds and Lipschitz constants.
Tables~\ref{tab:ODE_Bs} and \ref{tab:ODE_Ls} indicate convergence of both the computed bounds and Lipschitz constants as both $m$ and $n$ increase.
This suggests that both criteria of Assumption~\ref{assump:almost_aec} hold in the stronger a.e.~sense.
\begin{table}[htb!]
\begin{center}
\begin{tabular}{c|c|c|c|c|c} \hline
\diagbox{$m$}{$n$} & 1 & 2 & 3 & 4 & 5 \\ \hline
1E3 & 0.71   & 0.84   & 0.89   & 0.87  & 0.86  \\ \hline
1E4 & 0.71   & 0.87   & 0.90   & 0.87  & 0.86   \\ \hline
1E5 & 0.70   & 0.92   & 0.94   & 0.90  & 0.90 \\ \hline
\end{tabular}
\end{center}
\caption{Representative results for $(B_{n,m})$ for the estimated densities associated with the $n$th approximate map constructed from $m$ iid samples.}
\label{tab:ODE_Bs}
\end{table}
\begin{table}[htb!]
\begin{center}
\begin{tabular}{c|c|c|c|c|c} \hline
\diagbox{$m$}{$n$} & 1 & 2 & 3 & 4 & 5 \\ \hline
1E3 & 0.88   & 2.09    & 1.76    & 1.60  & 1.62  \\ \hline
1E4 & 0.84  & 4.07   & 2.49    & 2.14  & 2.21   \\ \hline
1E5 & 0.81   & 7.01   & 2.74    & 2.22  & 2.32 \\ \hline
\end{tabular}
\end{center}
\caption{Representative results for $(L_{n,m})$ for the estimated densities associated with the $n$th approximate map constructed from $m$ iid samples.}
\label{tab:ODE_Ls}
\end{table}

We now verify the convergence in Lemma~\ref{lem:pf_prior_almostauec_1} and Theorem~\ref{thm:lam_pf_D}.
First, a single initial set of $1E4$ iid~parameter samples drawn from the initial distribution are generated.
Then, the approximate push-forward densities are estimated using a standard Gaussian KDE applied to the approximate map evaluations of these parameter samples.
The effective support of the push-forward densities is contained within $D_c=[0,4]$.
We therefore compute Monte Carlo estimates of $	\|\pi_{\mathcal{D}}^Q(q)-\pi_{\mathcal{D}}^{Q_n}(q)\|_{L^r(D_c)} $ for various $r\geq 1$ using a fixed set of $1E4$ iid~uniform random samples of $D_c$.
The left plot in Figure~\ref{fig:ODE_forward_error} shows the corresponding error plots for $r=1,2,\ldots, 5$ as functions of the approximate map number.
These error plots indicate an exponential convergence of the approximate push-forward densities for $r\geq 1$, which verifies the convergence in Lemma~\ref{lem:pf_prior_almostauec_1}.
Before verifying the convergence in Theorem~\ref{thm:lam_pf_D}, we note that $\dspace=(0,\infty)$ is not actually compact, so there is no strict guarantee that the results from the theorem apply.
Nonetheless, the probability of any of QoI maps producing values in $\dspace$ greater than, say $10$, is nearly zero. 
In other words, the computational support of all probability densities on $\dspace$ for which it is plausible that a random number generator will sample from is effectively compact, and we expect the results for Theorem~\ref{thm:lam_pf_D} to hold. 
We estimate $\|\pi_{\mathcal{D}}^Q(Q(\lambda))-\pi_{\mathcal{D}}^{Q_n}(Q_n(\lambda))\|_{L^2(\Lambda)} $ using Monte Carlo estimates for each of the $n=1,2,\ldots, 5$ approximate maps and the same $1E4$ iid~samples drawn from the initial distribution as were used to construct the push-forwards.
These errors are summarized in the right plot of Figure~\ref{fig:ODE_forward_error} as a function of the approximate map number, which verifies the convergence in Theorem~\ref{thm:lam_pf_D}.
\begin{figure}[ht]
\begin{center}
\includegraphics[width=0.49\textwidth]{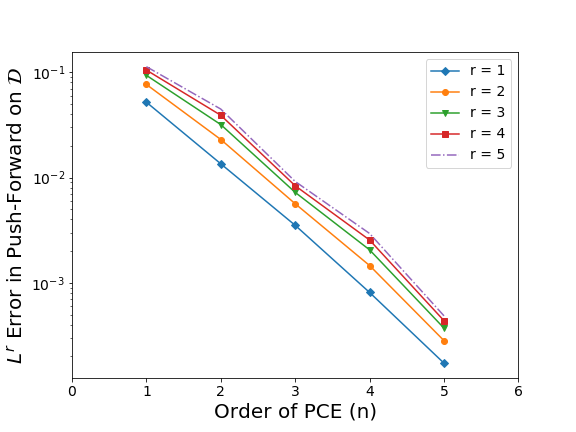}
\includegraphics[width=0.49\textwidth]{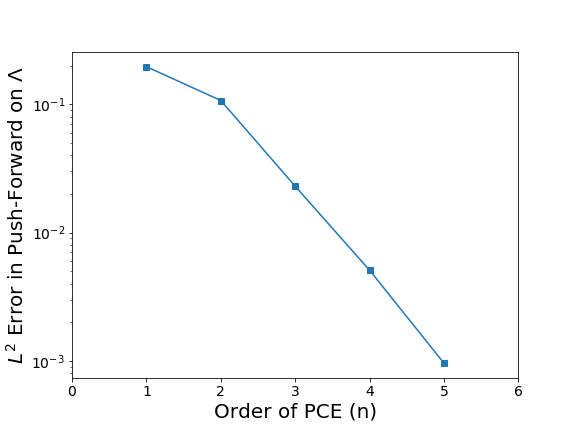}
\end{center}
\caption{Convergence of push-forward densities for ODE example.
  Left: $L^r$ error plot of the push-forward densities on $\dspace$ for $r=1, 2, \cdots, 5$.
   Right: $L^2$ error plot of the push-forward densities on $\pspace$.}
\label{fig:ODE_forward_error}
\end{figure}

\subsubsection{Convergence of inverse problem}
We now verify the convergence in Theorem~\ref{thm:posterior_convergence}.
Using the same initial distribution as above, we assume the observed density is given by a $N(1, 0.1^2)$ distribution.
We use the same approximate maps and approximate push-forward densities as above.
Before we analyze the convergence, we first verify the Assumption~\ref{assump:dom} using the expected value of the ratio, $\mathbb{E}_i(r(\qoia))$, mentioned in Section~\ref{sec:verifyassump2}.
\begin{table}[htb!]
\begin{center}
\begin{tabular}{c|c|c|c|c|c} \hline
$n$ & 1 & 2 & 3 & 4 & 5 \\ \hline
$\mathbb{E}_i(r(\qoia))$ & 0.58   & 0.98    & 0.98    & 0.98  & 0.98  \\ \hline
\end{tabular}
\end{center}
\caption{Representative estimates of $\mathbb{E}_i(r(\qoia))$ with $m=1E4$ iid samples used for both density estimation of associated push-forward densities and Monte Carlo estimate of $\mathbb{E}_i(r(\qoia))$.}
\label{tab:ODE_Ei}
\end{table}
Since approximate maps associated with $n\geq 2$ produce $\mathbb{E}_i(r(\qoia))$ estimates close to 1 from Table~\ref{tab:ODE_Ei}, it appears there is no violation of Assumption~\ref{assump:dom} for these maps.

\begin{figure}[ht]
\begin{center}
\includegraphics[width=0.5\textwidth]{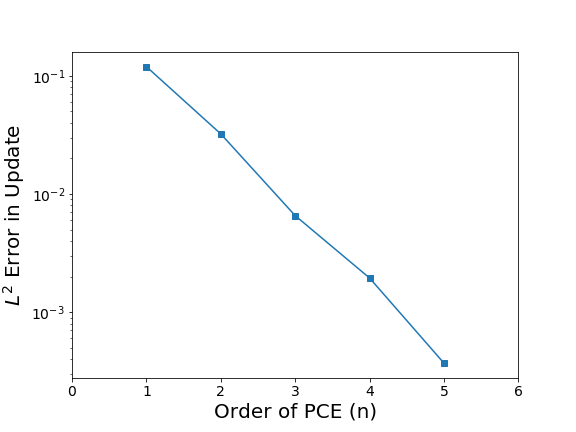}
\end{center}
\caption{Convergence of updated densities for ODE example in $L^2$.}
\label{fig:ODE_inverse_error}
\end{figure}

Figure~\ref{fig:ODE_inverse_error} illustrates the error in the updated densities associated with the $n$th approximate map. Here, the error is given by $\|\pi_{\Lambda}^{u,n}(\lambda)-\pi_{\Lambda}^u(\lambda)\|_{L^2(\Lambda)} $, which is again estimated using Monte Carlo integration with the same $1E4$ samples used to construct the push-forward densities. The plot demonstrates the $L^2$ convergence of the updated densities.

%


\subsection{PDE Example}\label{subsec:pde}
Consider the following PDE,
\begin{equation}\label{eq:pde}
\begin{cases}- \nabla\cdot(A\nabla u) = (e^{\lambda_1}\lambda_1^2\pi^2 + e^{\lambda_2}\lambda_2^2\pi^2)u, & \text{ in }\Omega \\
u = 0,  & \text{ on } \Gamma_0  \\
(A\nabla u)\cdot n = -e^{\lambda_2}\lambda_2\pi \sin(\lambda_1\pi x)\sin (\lambda_2\pi y) , &\text{ on } \Gamma_1  \\
(A\nabla u)\cdot n = e^{\lambda_2}\lambda_2\pi \sin(\lambda_1\pi x)\sin (\lambda_2\pi y) , & \text{ on } \Gamma_2 \\
(A\nabla u)\cdot n = e^{\lambda_1}\lambda_1\pi \cos(\lambda_1\pi x)\cos (\lambda_2\pi y) , &\text{ on } \Gamma_3   \\
\end{cases}
\end{equation}
where
\[ A = \begin{bmatrix} e^{\lambda_1} & 0 \\ 0 & e^{\lambda_2} \end{bmatrix} \]
$\Gamma_0 = \{ (x,y): x=0, 0\leq y\leq 1\}$, $\Gamma_1 = \{ (x,y): y=1, 0\leq x\leq 1\}$, $\Gamma_2 = \{ (x,y): y=0, 0\leq x\leq 1\}$, $\Gamma_3 = \{ (x,y): x=1, 0\leq y\leq 1\}$ and $\Omega = [0,1]\times [0,1]$.\\
The QoI is the average value of the solution $u$ in region $[a, b]\times [c, d]$,
\begin{equation}\label{eq:pde_qoi}
Q(\lambda_1,\lambda_2) =  \frac{1}{(b-a)(d-c)}\int_{c}^{d} \int_{a}^{b} u(x,y;\lambda_1,\lambda_2)\, dx\, dy
\end{equation}
where, in this example, we know $u(x,y;\lambda_1,\lambda_2)=\sin( \lambda_1\pi x )\cos (\lambda_2 \pi y)$ is the exact solution of \eqref{eq:pde} associated with a particular sample $(\lambda_1,\lambda_2)\in\pspace$, and we choose $a=c=0.4$, $b=d=0.6$.
For this example, $\pspace = (-\infty, \infty)\times (-\infty, \infty)$ and $\dspace =(-\infty,\infty)$. \\

%

\subsubsection{Convergence of forward problem}
As before, the goal of this section is to verify the convergence results of Lemma~\ref{lem:pf_prior_almostauec_1} and Theorem~\ref{thm:lam_pf_D}.
Assume $\lambda_1\sim N(\mu_1, \sigma_1^2)$ and $\lambda_2\sim N(\mu_2, \sigma_2^2)$ with $\mu_1=\mu_2=0$, $\sigma_1=\sigma_2=0.1$.
Then, we write the exact map $Q(\lambda_1,\lambda_2)$ as follows:
\[
	Q(\lambda_1, \lambda_2) = \sum_{i,j=0}^{\infty} q_{ij} \Phi_{ij}\left(\frac{\lambda_1 - \mu_1}{\sigma_1},\frac{\lambda_2 - \mu_2}{\sigma_2}\right).
\]
where $\set{\Phi_{ij}}$ (for $0\leq i,j<\infty$) denotes a complete 2-d orthogonal Hermite polynomial basis, $\set{q_{i,j}}$ (for $0\leq i,j<\infty$) denotes the corresponding coefficients of this PCE, and the equality is understood to hold in $L^2$.
Here, instead of formulating and solving a system of PDEs to compute the coefficients $\set{q_{ij}}$, we apply a non-intrusive pseudo-spectral approach to approximate the $\set{q_{ij}}$.
Specifically, starting with the PCE above, we compute (weighted) $L^2$-inner products of both sides with a fixed $\Phi_{ij}$.  Then, by exploiting the orthogonality of the polynomials, $q_{ij}$ is given as an integral of $Q(\lambda_1,\lambda_2)$ weighted by the product of $\Phi_{ij}$ and the underlying normal distribution for which these polynomials are orthogonal (and normalized by the weighted $L^2$-norm of $\Phi_{ij}$).
In general, we do not expect to have $Q(\lambda_1,\lambda_2)$ available in closed form to compute this integral, so we instead turn to quadrature methods to discretize the integrals over $\pspace$ defining $q_{ij}$ where numerical approximations of $Q(\lambda_1,\lambda_2)$ are used at each quadrature point.
To this end, we use a triangulation of a $50\times 50$ square mesh on $\Omega$ to obtain a numerical estimate of $u(x,y;\lambda_1,\lambda_2)$ and subsequently of $Q(\lambda_1,\lambda_2)$. 
Estimates of $Q(\lambda_1,\lambda_2)$ are obtained on a set of $400$ parameter samples taken from the tensor product of 20-point Gauss-hermite quadrature points in each dimension to ensure accuracy well beyond the degree of polynomials considered in this example.
Using these computed $q_{ij}$, we write the approximate maps as
\[
Q_n(\lambda_1, \lambda_2) = \sum_{i+j\leq n} q_{ij} \Phi_{ij}\left(\frac{\lambda_1 - \mu_1}{\sigma_1},\frac{\lambda_2 - \mu_2}{\sigma_2}\right).
\]
The reference results below are obtained by using the manufactured solution and evaluating a closed form expression for the resulting QoI on each of the parameter samples used by the approximate maps to construct estimates of the exact densities.

Before verifying assumptions and convergence of densities, we remark on some interesting behavior with regards to the QoI maps.
The closed form expression for the QoI map reveals an odd function in the $\lambda_1$-direction and an even function in the $\lambda_2$-direction, which is due to the oscillatory nature of the manufactured solution and domain $[a,b]\times[c,d]$ used to compute the QoI.
Subsequently, this implies the exact QoI map is orthogonal to many of the polynomials $\Phi_{ij}$.
In this example, we consider approximate maps of order $n=1,2,\ldots, 5$ where $\Phi_{ij}$ for any $i+j\leq n$ is given by the product $\Phi_i(\lambda_1)\Phi_j(\lambda_2)$.
Thus, it is straightforward to see that the exact QoI map is orthogonal in the weighted $L^2$-inner product to many of the Hermite polynomials.
In fact, the only $(i,j)$-pairs with $i+j\leq 5$ such that $q_{ij}$ is non-zero are
\[
	S:=\set{(1,0), (3,0), (1, 2), (5,0), (3,2), (1, 4)}.
\]
This is numerically confirmed as well where the quadrature method produces estimated values for $q_{ij}$ with $(i,j)\notin S$ that are orders of magnitude smaller than the coefficients associated with $(i,j)\in S$.
Consequently, the approximate maps $Q_1$, $Q_3$, and $Q_5$ are all distinct approximations to $Q$ whereas $Q_2$ and $Q_4$ are  almost indistinguishable from $Q_1$ and $Q_3$, respectively, with only slight variations present due to the use of numerical quadrature and the numerical solution of the PDE used in the quadrature method.
The impact of this on the convergence of densities is seen below. 


As before, we first verify Assumption~\ref{assump:almost_aec} as described in Section~\ref{sec:verifyassump1} to compute sequences of bounds and Lipschitz constants.
Tables~\ref{tab:PDE_Bs} and ~\ref{tab:PDE_Ls} indicate convergence of both the computed bounds and Lipschitz constants as both $m$ and $n$ increase.
As in the previous example, this suggests that both criteria of Assumpton~\ref{assump:almost_aec} hold in the stronger a.e.~sense.
\begin{table}[htb!]
\begin{center}
\begin{tabular}{c|c|c|c|c|c} \hline
\diagbox{$m$}{$n$} & 1 & 2 & 3 & 4 & 5 \\ \hline
1E3   & 2.63    & 2.63    & 2.61  & 2.61 & 2.61  \\ \hline
1E4   & 2.64    & 2.64    & 2.61  & 2.61  &  2.61\\ \hline
1E5   & 2.60   & 2.60    & 2.57  & 2.57 & 2.57 \\ \hline
\end{tabular}
\end{center}
\caption{Representative results for $(B_{n,m})$ for the estimated densities associated with the $n$th approximate map constructed from $m$ iid samples.}
\label{tab:PDE_Bs}
\end{table}
\begin{table}[htb!]
\begin{center}
\begin{tabular}{c|c|c|c|c|c} \hline
\diagbox{$m$}{$n$} & 1 & 2 & 3 & 4 & 5 \\ \hline
1E3   & 12.11    & 12.11    & 11.95  & 11.95  & 11.95 \\ \hline
1E4   & 11.47   & 11.47   & 11.41  & 11.41 & 11.41  \\ \hline
1E5   & 11.07   & 11.07    & 10.86  & 10.86 & 10.86 \\ \hline
\end{tabular}
\end{center}
\caption{Representative results for $(L_{n,m})$ for the estimated densities associated with the $n$th approximate map constructed from $m$ iid samples.}
\label{tab:PDE_Ls}
\end{table}

We now verify the convergence in Lemma~\ref{lem:pf_prior_almostauec_1} and Theorem~\ref{thm:lam_pf_D}.
A single initial set of $1E4$ iid~parameter samples are generated and each of the approximate maps are evaluated on these parameter samples to generate different approximate QoI sample sets.
Then, the push-forward densities for each approximate map are estimated using a standard Gaussian KDE.
The range of output samples falls within the interval $[-1,1]$, so we set $D_c=[-1,1]$ and estimate $	\|\pi_{\mathcal{D}}^Q(q)-\pi_{\mathcal{D}}^{Q_n}(q)\|_{L^r(D_c)} $ for various $r\geq 1$ using Monte Carlo estimates on a fixed set of $1E4$ uniform random samples in $D_c$.
\begin{figure}[ht]
\begin{center}
\includegraphics[width=0.49\textwidth]{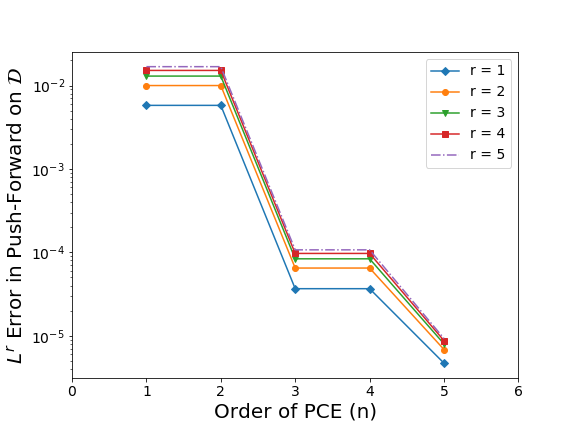}\hspace{0.1cm}
\includegraphics[width=0.49\textwidth]{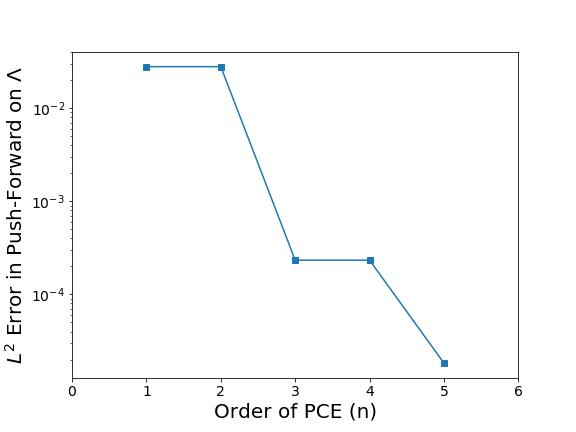}
\end{center}
\caption{Convergence of push-forward densities for PDE example.
  Left: $L^r$ error plot of the push-forward densities on $\dspace$ for $r=1, 2, \ldots, 5$.
   Right: $L^2$ error plot of the push-forward densities on $\pspace$.}
\label{fig:PDE_forward_error}
\end{figure}

The left plot of Figure~\ref{fig:PDE_forward_error} shows the corresponding error plots for $r=1,2,\ldots, 5$ as a function of the approximate map number.
Note that the error decreases when $n$ increases from an even to an odd integer but does not appear to change when $n$ increases from an odd to an even integer.
This is due to the symmetry of the QoI map and its orthogonality with respect to even-ordered polynomials as described above and is thus completely expected for this sequence of approximate maps.
As in the previous example, we note that $\dspace=(-\infty,\infty)$ so that the conclusions of Theorem~\ref{thm:lam_pf_D} cannot be guaranteed.
However, as before, virtually all samples are generated within a compact set of $\dspace$ (in this case $[-1,1]$), so we expect that to see convergence. 
We then estimate $\|\pi_{\mathcal{D}}^Q(Q(\lambda))-\pi_{\mathcal{D}}^{Q_n}(Q_n(\lambda))\|_{L^2(\Lambda)}$ using Monte Carlo estimates with the same $1E4$ samples used to estimate the approximate push-forwards for $n=1, 2,\ldots, 5$, which is summarized in the right plot of Figure~\ref{fig:PDE_forward_error} as a function of the approximate map number.
The overall trend of decreasing errors in both plots verifies the theoretical results of Lemma~\ref{lem:pf_prior_almostauec_1} and Theorem~\ref{thm:lam_pf_D}.

%


\subsubsection{Convergence of inverse problem}
We now verify the convergence in Theorem~\ref{thm:posterior_convergence}.
Using the same initial distribution as above, we assume the observed density is given by a $N(0.3, 0.1^2)$ distribution.
We use the same approximate maps and approximate push-forward densities as above.
Assumption~\ref{assump:dom} is again verfied using the expected value of the ratio, $\mathbb{E}_i(r(\qoia))$, mentioned in Section~\ref{sec:verifyassump2}.
For each of the approximate maps, rounding $\mathbb{E}_i(r(\qoia))$ at the second decimal gives $1.00$, so we omit summary of these values in a table.
The quality of these estimates of expected values is easily explained.
First, the QoI map and its approximations are all approximately linear within several standard deviations of the mean parameter values.
Second, a linear mapping of a Gaussian distribution produces a Gaussian distribution, and the use of $1E4$ samples along with a Gaussian KDE will in general produce excellent approximations to $1$-dimensional Gaussian distributions.
Thus, for this problem, all of the estimated push-forward densities are extremely accurate approximations to the push-forward densities associated to each approximate map over sets of high probability since they are approximately Gaussian.
When this occurs and the observed density is in the range of the QoI map, we generally expect values of $\mathbb{E}_i(r(\qoia))$ to be very close to $1.00$.

\begin{figure}[htb!]
\begin{center}
\includegraphics[width=0.5\textwidth]{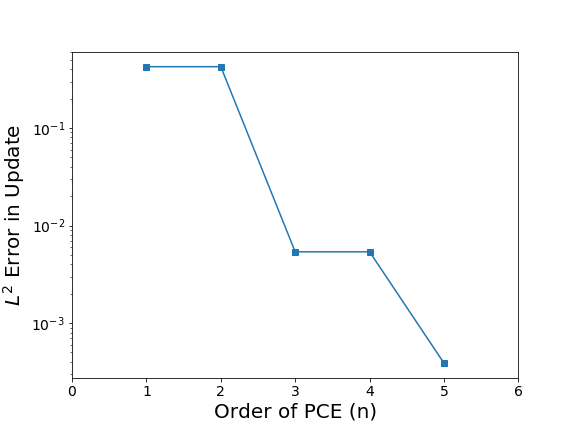}
\end{center}
\caption{Convergence of updated densities for PDE example in  $L^2$.}
\label{fig:PDE_inverse_error}
\end{figure}

Figure~\ref{fig:PDE_inverse_error} then illustrates the error of updated densities as a function of approximate map number.
Here, we again use Monte Carlo estimates of $\|\pi_{\Lambda}^{u,n}(\lambda)-\pi_{\Lambda}^u(\lambda)\|_{L^2(\Lambda)}$ using the same set of parameter samples used to construct the push-forwards, and we observe $L^2$ convergence of the updated densities.

%


\section{Conclusion}\label{sec:conclusions}
We developed a theoretical framework for analyzing the convergence of probability density functions computed using approximate models for both forward and inverse problems.
The theoretical results are quite general and apply to any $L^p$-convergent sequence of approximate models.
This greatly extends previous work that required almost uniform convergence of approximate models (i.e., $L^\infty$ convergence).
A simple numerical example producing a singular push-forward density is used to show the permissiveness of the two main assumptions used in this work.
Moreover, these assumptions are verified in each of the main numerical examples that demonstrate the convergence results for polynomial chaos expansions, which are commonly used to build approximate models in the literature.

\section{Acknowledgments}\label{sec:acknowledge}

T.~Wildey's work is supported by the Office of Science Early Career Research Program.
T.~Butler's and W.~Zhang's work is supported by the National Science Foundation under Grant No.~DMS-1818941.

\appendix

\section{An almost example and the assumptions}\label{app:a}

Here, we discuss the permissiveness of Assumption~\ref{assump:almost_aec} in the context of a simple example that highlights and builds intuition about several theoretical and computational points.

Let $\pspace = [-1,1]$.
Suppose the exact QoI map is given by $Q(\lambda)=\lambda^5$ and the approximate QoI maps, $(\qoia)$, are given by piecewise-linear interpolating splines using $n+2$ knots with $k$th knot given by $-1+\frac{2}{n+1}(k-1)$.
In other words, the $n$ refers to the number of regularly spaced interior points at which we evaluate the exact map to construct the approximate map.
It is clear that $\dspace=[-1,1]$ and $Q_n\to Q$ in $L^p(\pspace)$ for any $1\leq p\leq \infty$.

Now, suppose that the initial density is given by a uniform distribution on $\pspace$.
In this case, the exact push-forward density associated with the exact QoI map is given by
\[
	\pfpriordens(q) = \frac{1}{10}q^{-4/5},
\]
which is neither continuous on $\dspace$ nor in $L^\infty(\dspace)$.
In fact, $\pfpriordens \notin L^r(\dspace)$ for any $r\geq 5/4$.
The approximate push-forward densities are defined by a sequence of simple functions since they are defined by mapping a uniform density through piecewise-linear 1-to-1 maps.
When $n$ is odd, a straightforward computation shows that the $n$th approximate push-forward density is equal to the constant $\frac{(n+1)^4}{2^5}$ on the interval $\left(\frac{-2^5}{(n+1)^5},\frac{2^5}{(n+1)^5}\right)\subset\dspace$.
It follows that there are subsequential pointwise limits of these approximate push-forward densities evaluated at or near $q=0$ that either tend to infinity or can be made arbitrarily large.

While the features of the push-forward densities discussed above may seem problematic, there is no issue in applying the theory of this work to this problem.
To see this, note that for any $\epsilon>0$, there exists a uniform bound for the family of densities on $\dspace\backslash(-\epsilon/2,\epsilon/2)$.
Moreover, since the simple function approximate densities are bounded above on $\dspace\backslash(-\epsilon/2,\epsilon/2)$ and converge at a.e.~$q\in\dspace\backslash(-\epsilon/2,\epsilon/2)$, they converge almost uniformly and are subsequently almost a.e.c on $\dspace$.

The above example illustrates the generality of the theory developed in this work for forward UQ problems.
Specifically, having both criteria of Assumption~\ref{assump:almost_aec} hold in an almost sense is rather permissive since the theory applies to problems with push-forward densities containing countably infinite numbers of singularities.
Thus, under what we refer to as ``normal problem conditions'' where initial densities are non-singular and the QoI map possesses only a finite number of critical points (typically corresponding to the number of singularities in the push-forward density as seen in the above example), we  expect Assumption~\ref{assump:almost_aec} to hold.
However, a theoretical guarantee of convergence is not necessarily observed by the estimated densities if a computational budget restricts the quality of the estimates.
To ensure that computational estimates based on finite sampling are even remotely accurate, we consider a more practical restriction on the class of forward UQ problems for which we apply the theory.
Specifically, we restrict ourselves to problems that involve push-forward densities containing relatively small numbers of singularities.
This can generally be assured, for example, by restricting the class of initial densities to be non-singular and considering QoI maps that have finite numbers of critical points over the parameter space.

The locations of critical points of the maps and corresponding singularities in push-forward densities are typically not known a priori.
Yet, it is our ability to sufficiently sample around these points in the parameter and data spaces that most impact the point-wise accuracy of the estimated densities since most density estimation techniques (and especially Gaussian KDE) oversmooth ``peaks'' in densities.
Estimating locations of potential singularities may allow for significant improvement in estimated density accuracy over all of $\dspace$ using alternative sampling techniques.
While such numerical issues are not the focus of this work, we outline one computational approach to tackle this issue based on applying clustering techniques borrowed from machine learning to investigate the criteria of Assumption~\ref{assump:almost_aec} in the context of the above example.

Representative results of the quantitative analyses discussed in Section~\ref{sec:verifyassump1} are summarized in Tables~\ref{tab:concept_example_Bs} and \ref{tab:concept_example_Ls}.
\begin{table}[ht!]
\begin{center}
\begin{tabular}{c|c|c|c|c|c} \hline
\diagbox{$m$}{$n$} & 1 & 2 & 4 & 8 & 16 \\ \hline
1E3 & 0.54   & 1.52   & 2.50   & 2.94  & 3.19  \\ \hline
1E4 & 0.53   & 2.09   & 3.42   & 4.10  & 4.43   \\ \hline
1E5 & 0.51   & 3.03   & 4.44   & 5.63  & 6.12 \\ \hline
\end{tabular}
\end{center}
\caption{Representative results for $(B_{n,m})$ for the estimated densities associated with the $n$th approximate map constructed from $m$ iid samples.}
\label{tab:concept_example_Bs}
\end{table}
\begin{table}[ht!]
\begin{center}
\begin{tabular}{c|c|c|c|c|c} \hline
\diagbox{$m$}{$n$} & 1 & 2 & 4 & 8 & 16 \\ \hline
1E3 & 1.41   & 6.08    & 13.68    & 17.98  & 20.81  \\ \hline
1E4 & 2.19   & 13.89   & 27.26    & 38.35  & 44.06   \\ \hline
1E5 & 3.48   & 33.14   & 46.45    & 75.08  & 89.42 \\ \hline
\end{tabular}
\end{center}
\caption{Representative results for $(L_{n,m})$ for the estimated densities associated with the $n$th approximate map constructed from $m$ iid samples.}
\label{tab:concept_example_Ls}
\end{table}
In these tables, we observe the slow divergence of both the bounds and Lipschitz constants as both $m$ and $n$ increase.
While we omit further numerical analysis here, we comment on one potential approach to help improve accuracy of the estimated push-forward densities by identifying the regions in parameter space associated with data singularities. 
First, it may be possible to use the sorting algorithms on the arrays that returned the bounds and Lipschitz constants to identify points in data space where potential singularities are nearby. 
Subsequently, applying clustering algorithms on the corresponding samples in the parameter space, it may be possible to identify the regions in parameter space that should be sampled more extensively to improve the accuracy of the push-forward density estimates.

To give some intuition about the range of values we typically encounter for the diagnostic tool described in Section~\ref{sec:verifyassump2}, we return to the simple conceptual example.
We use three different observed densities, which we refer to as
\[
	\text{Case I: } \obsdens \sim N(0.50,0.1^2), \quad \text{Case II: } \obsdens \sim N(0.25,0.1^2), \quad \text{and Case III: } \obsdens\sim N(1,0.1^2).
\]
These cases demonstrate the variation in values we expect for $\mathbb{E}_i(r(\qoia))$ under different scenarios related to Assumption~\ref{assump:dom} and the magnitude of errors present in the estimated density relative to the effective support of the observed density.
In all cases, the effective support may be interpreted being within three standard deviations of the means of the observed densities.
Below, we refer to the plots in Figure~\ref{fig:concept_example_observed} for a visual reference of these cases that demonstrate the extent to which either Assumption~\ref{assump:dom} is violated or where significant errors in the estimated densities occur.
For simplicity in each of these plots, we only plot the exact push-forward density for the exact map, its estimation with $m=1E5$ iid samples, and the different observed densities.
\begin{figure}[htbp]
\begin{center}
\includegraphics[width=0.32\textwidth]{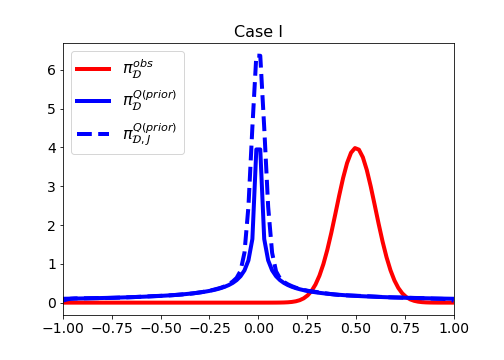}
\includegraphics[width=0.32\textwidth]{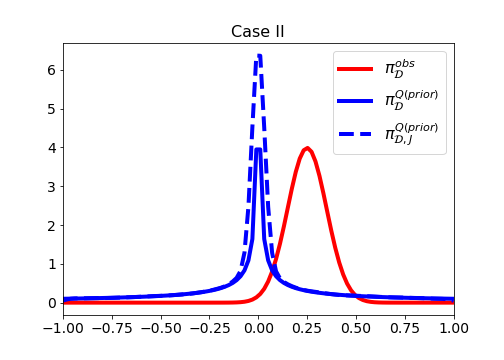}
\includegraphics[width=0.32\textwidth]{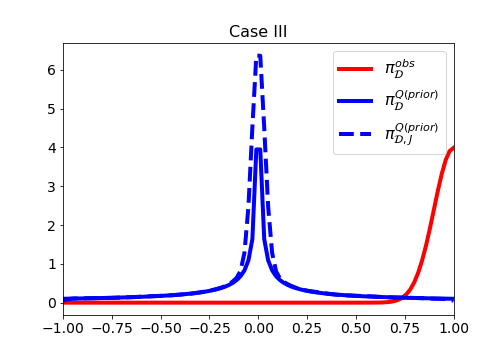}
\end{center}
\caption{Observed densities (red curves) used in Cases I (left), II (middle), and III (right). The solid blue curve is the exact push-forward for the exact map. The dashed blue curve is the estimated push-forward for this map using $m=1E5$ iid samples.
The errors in the estimated density for both this map and all approximate maps are primarily restricted to a narrow region around $q=0$, which is in the effective support of the observed density in Case II.
The effective support for the observed density in Case III extends beyond the range predicted by the exact push-forward density, which is a violation of Assumption~\ref{assump:dom}.}
\label{fig:concept_example_observed}
\end{figure}

In Case I (see the left plot in Figure~\ref{fig:concept_example_observed}),  Assumption~\ref{assump:dom} ``effectively'' holds (relative to the effective support of the observed density) and the estimated density is an accurate estimate of the associated push-forward density over the effective support of the observed density.
Case II (see the middle plot in Figure~\ref{fig:concept_example_observed}) is similar to Case I except that significant errors in the estimated density are now present in the effective support of the observed density.
In Case III (see the right plot in Figure~\ref{fig:concept_example_observed}), Assumption~\ref{assump:dom} is violated since significant portions of the effective support of the observed density are not ``predicted'' by any of the push-forward densities or their estimates.

\begin{table}[ht!]
\begin{center}
\begin{tabular}{l|c|c|c|c|c} \hline
\diagbox{Case}{$n$} & 1 & 2 & 4 & 8 & 16 \\ \hline
I  & 1.00   & 1.00   & 1.00    & 0.98  & 0.99  \\ \hline
II & 0.99   & 0.88   & 0.85    & 0.92  & 0.92   \\ \hline
III  & 0.69   & 0.66   & 0.63    & 0.61  & 0.61 \\ \hline
\end{tabular}
\end{center}
\caption{Representative estimates of $\mathbb{E}_i(r(\qoia))$ with $m=1E4$ iid samples used for both density estimation of associated push-forward densities and Monte Carlo estimate of $\mathbb{E}_i(r(\qoia))$.}
\label{tab:concept_example_mean_rs}
\end{table}
In Table~\ref{tab:concept_example_mean_rs}, we summarize the estimated values of $\text{I}(\postdensa)$ for $n=1, 2, 4, 8$, and $16$.
We see that in Case I, the values are all close to $1$, while in Case II, there is some deviation away from $1$ that is due to errors in the estimated push-forward densities present in the effective support of the observed density.
However, in Case III, we see significant deviations away from $1$ that are {\em primarily} due to the violation of Assumption~\ref{assump:dom}.
In fact, if we used the exact push-forward densities associated with each approximate map, then these values would be very close to $0.5$ because we are missing half the support of the observed density.
The reason for the values being slightly higher than $0.5$ is due to the kernel estimate ``extending'' the effective support of the push-forward densities beyond the actual data space defined by $[-1,1]$.

The takeaways are this: monitoring $\mathbb{E}_i(r(\qoia))$ is a good diagnostic for determining if either Assumption~\ref{assump:dom} is violated or if significant errors are present in the associated estimated push-forward density (and subsequently if errors are present in the associated estimate of the {\em approximate} updated density).
While there are certainly exceptions, we generally find large deviations of this expected value away from $1$ are due to violations of the assumption while smaller deviations away from $1$ are often due to numerical errors in the estimated density.
Finally, we note that just because estimated values of $\mathbb{E}_i(r(\qoia))$ are near 1 does {\em not} mean that the estimated push-forward density or associated estimate of the updated density are accurate approximations for the {\em exact} push-forward density or {\em exact} updated density one would obtain using the {\em exact} QoI map.
A ``good value'' of $\mathbb{E}_i(r(\qoia))$ (i.e., a value near $1$) simply speaks to both Assumption~\ref{assump:dom} not being violated and also to the relative accuracy of the estimated push-forward density with respect to its non-estimated counterpart.
\bibliographystyle{siam}
\bibliography{references}

\end{document}